\let\SavedRightarrow=\Rightarrow
\let\Rightarrow=\SavedRightarrow
\newcommand\LL{{\mathcal L}}
\newcommand\PP{{\mathcal P}}
\newcommand\II{{\mathcal I}}
\newcommand\EE{{\mathcal E}}
\newcommand\DD{{\mathcal D}}
\newcommand\WW{{\mathcal W}}
\newcommand\QQQ{{\mathbb Q}}
\newcommand\RRR{{\mathbb R}}
\newcommand\cccc{{\mathfrak c}}
\newcommand\cchi{{\raise 2 pt \hbox{$\chi$}}}
\newcommand{\MS}{\mathsf{MS}}  % Menger Sponge
\newcommand\cl{\mathrm{cl}}   % closure
\newcommand\Lim{\mathrm{Lim}}  % limit ordinals
\newcommand\tleft{\vartriangleleft}  % triangle less than symbol.
\newcommand\cat{^{\mathord{\frown}}}  % for string concatenation
\newcommand\rest{\upharpoonright}     % as a relation
\newcommand\res{\mathord {\upharpoonright}}  % less space around it
\newcommand\iv{^{-1}} % inverse
\newcommand\onto{\twoheadrightarrow}
\newcommand\eop{{\Large \Coffeecup}}  
\newenvironment{itemizz}{\begin{itemize}\setlength{\itemsep}{-1mm}} %
{\end{itemize}}                              
\newcounter{constr}
\newcounter{ctr}
\newenvironment{enumctr}{\begin{itemize}\setlength{\itemsep}{-1mm}} %
{\end{itemize}}
\newcommand\itemctr{\addtocounter{ctr}{1}\item[\arabic{ctr}.]}
\newtheorem{theorem}{Theorem}[section]
\newtheorem{definition}[theorem]{Definition}
\newtheorem{notation}[theorem]{Notation}
\newtheorem{lemma}[theorem]{Lemma}
\newtheorem{question}[theorem]{Question}
\newtheorem{conditions}[theorem]{Conditions}
\newtheorem{corollary}[theorem]{Corollary}
\newtheorem{proposition}[theorem]{Proposition}
\newenvironment{proof}{{\bf Proof.}}{\eop\medskip}
\newenvironment{proofof}[1]{\medskip \textbf{Proof of #1.}}{\eop\medskip}
\begin{document}

\title{Aronszajn Compacta
\footnote{
2000 Mathematics Subject Classification:
Primary  54D30, 03E35.
Key Words and Phrases:
Aronszajn tree, hereditarily separable, hereditarily Lindel\"of.
}}

\author{Joan E. Hart\footnote{University of Wisconsin, Oshkosh,
WI 54901, U.S.A.,
\ \ hartj@uwosh.edu}
\  and
Kenneth Kunen\footnote{University of Wisconsin,  Madison, WI  53706, U.S.A.,
\ \ kunen@math.wisc.edu}
\thanks{Both authors partially supported by NSF Grant
DMS-0456653.}
}

\maketitle

\begin{abstract}
We consider a class of compacta $X$ such that the maps from $X$
onto metric compacta define an
Aronszajn tree of closed subsets of $X$.
\end{abstract}

\section{Introduction} 
\label{sec-intro}
All topologies discussed in this paper are assumed to be Hausdorff.
We begin by defining an \emph{Aronszajn compactum},
along with a natural tree structure,
by considering a space embedded into a cube.
An equivalent definition, in terms of elementary submodels,
is considered in Section \ref{sec-elem}.

\begin{notation}
\label{not-basic}
Given a product $\prod_{\xi < \lambda} K_\xi$:
If $\alpha \le \beta \le \lambda$, then 
$\pi^\beta_\alpha$ denotes the natural projection from
$\prod_{\xi < \beta} K_\xi$ onto
$\prod_{\xi < \alpha} K_\xi$.
If we are studying a space $X \subseteq \prod_{\xi < \lambda} K_\xi$
then $X_\alpha$ denotes $\pi^\lambda_\alpha(X)$,
and $\sigma^\beta_\alpha$ denotes the restricted map
$\pi^\beta_\alpha \res X_\beta$; so
$\sigma^\beta_\alpha: X_\beta \onto X_\alpha$.
\end{notation}

\begin{definition}
\label{def-emb-ar}
An \emph{embedded Aronszajn compactum} is a closed subspace
$X \subseteq [0,1]^{\omega_1}$ with
$w(X) = \aleph_1$ and $\cchi(X) = \aleph_0$ such that for some club
$C \subseteq \omega_1$:
for each $\alpha \in C$
$\LL_\alpha := \{x \in X_\alpha : |(\sigma^{\omega_1}_\alpha)\iv\{x\}| > 1\}$
is countable.  For each such $X$, define
$T=T(X) := \bigcup\{\LL_\alpha : \alpha \in C\}$, and
let $\tleft$ denote the following order:
if $\alpha, \beta \in C$, $\alpha < \beta$, $x \in \LL_\alpha$
and $y \in \LL_\beta$, then $x \tleft y$ iff $x = \pi^\beta_\alpha(y)$.
\end{definition}

The $\sigma^{\omega_1}_\alpha$ for which 
$|\LL_\alpha| \le \aleph_0$ are called
\textit{countable rank} maps in \cite{Fed,KT}.
Observe that $\langle T(X),\tleft \rangle $
is a tree.
Each level $\LL_\alpha$ is countable by definition, and is non-empty
because $w(X) = \aleph_1$; then $T$ is Aronszajn because $\cchi(X) = \aleph_0$.
Of course, a compactum of weight $\aleph_1$ may be embedded into
$[0,1]^{\omega_1}$ in many ways, but:

\begin{lemma}
\label{lemma-embed}
If $X,Y \subseteq [0,1]^{\omega_1}$, $X$ is
an embedded Aronszajn compactum, and $Y$ is homeomorphic to $X$,
then $Y$ is an embedded Aronszajn compactum.
\end{lemma}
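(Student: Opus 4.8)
The plan is to verify the three defining conditions for $Y$ separately. The conditions $w(Y) = \aleph_1$ and $\cchi(Y) = \aleph_0$ are purely topological, so they transfer immediately from $X$ to $Y$ through any homeomorphism; likewise $Y$, being homeomorphic to the compactum $X$, is compact and hence closed in $[0,1]^{\omega_1}$. All the real work therefore goes into producing a club witnessing the countable-rank condition for $Y$.

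The key enabling fact, which I would isolate as a preliminary lemma, is that for any closed $Z \subseteq [0,1]^{\omega_1}$ every continuous $f : Z \to [0,1]$ factors as $f = g \circ \sigma^{\omega_1}_\alpha$ for some $\alpha < \omega_1$ and some continuous $g$ on $Z_\alpha$; that is, $f$ depends on only countably many coordinates. This holds for any such $Z$, the Aronszajn hypotheses playing no role: each $Z_\alpha$ is a metric compactum, $Z$ is the continuous inverse limit $\varprojlim_\alpha Z_\alpha$, and the subalgebras $\{g \circ \sigma^{\omega_1}_\alpha : g \in C(Z_\alpha)\}$ of $C(Z)$ are closed (the bonding maps being surjective, $g \mapsto g\circ\sigma^{\omega_1}_\alpha$ is isometric), increase with $\alpha$, and have dense union by Stone--Weierstrass, since they contain the constants and separate points. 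Writing $f$ as a uniform limit of members of this union and taking the supremum of the countably many indices involved produces a single $\alpha$ through which $f$ factors.

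With this in hand I would align the two coordinate systems by a closing-off argument. Let $h : X \to Y$ be the homeomorphism, write $\tau^{\omega_1}_\alpha : Y \onto Y_\alpha$ (with $Y_\alpha = \pi^{\omega_1}_\alpha(Y)$) for the analogues on the $Y$ side, and consider the pulled-back coordinate functions $c_\xi := (\text{$\xi$-th coordinate of } Y)\circ h$ on $X$ and $d_\eta := (\text{$\eta$-th coordinate of } X)\circ h\iv$ on $Y$. By the factoring lemma each $c_\xi$ depends only on the $X$-coordinates below some $g(\xi) < \omega_1$, and each $d_\eta$ only on the $Y$-coordinates below some $g'(\eta) < \omega_1$. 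Let $C^\ast$ be the club of $\delta$ closed under both $g$ and $g'$. For $\delta \in C^\ast$, two points of $X$ agree on all coordinates $< \delta$ iff their $h$-images agree on all coordinates $< \delta$; hence $h$ carries the fibers of $\sigma^{\omega_1}_\delta$ bijectively onto the fibers of $\tau^{\omega_1}_\delta$, matching nontrivial fibers with nontrivial fibers. Consequently $\LL_\delta$ for $X$ and its analogue for $Y$ have the same cardinality for every $\delta \in C^\ast$.

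Finally I intersect with the given club $C$ for $X$: for $\delta \in C \cap C^\ast$ the set $\LL_\delta$ on the $X$ side is countable, so by the cardinality matching the corresponding branching set for $Y$ is countable as well, and $C \cap C^\ast$ is the club required by Definition \ref{def-emb-ar}. I expect the factoring lemma, together with checking that it meshes correctly with the closing-off, to be the main obstacle: the substance of the argument is that the branching (countable-rank) structure is intrinsic to $X$ rather than to its particular embedding, and it is precisely the countable-coordinate-dependence of continuous functions that lets the two embeddings be synchronized on a club. The remaining bookkeeping---that $C^\ast$ is a club and that the fibers match---is then routine.
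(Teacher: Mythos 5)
Your proposal is correct and follows essentially the same route as the paper: the paper's proof simply invokes ``the fact that there is a club $D \subseteq \omega_1$ on which $f$ commutes with projection,'' yielding homeomorphisms $f_\alpha : X_\alpha \onto Y_\alpha$, and then intersects with the given club $C$. Your factoring lemma (continuous real-valued functions on a closed $Z \subseteq [0,1]^{\omega_1}$ depend on countably many coordinates, via Stone--Weierstrass and closedness of the pulled-back subalgebras) together with the closing-off club $C^\ast$ is exactly the standard proof of that fact, so your write-up just supplies the details the paper leaves implicit.
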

\begin{proof}
Let $f: X \to Y$ be a homeomorphism.  Then use the fact that there
is a club $D \subseteq \omega_1$ on which $f$ commutes with projection;
that is, for $\alpha \in D$,
there is a homeomorphism $f_\alpha : X_\alpha \onto Y_\alpha$ such that
$\pi^{\omega_1}_\alpha \circ f  = f_\alpha \circ \pi^{\omega_1}_\alpha$.
\end{proof}

The proof of this lemma shows that the Aronszajn
trees derived from $X$ and from $Y$ are isomorphic on a club.

\begin{definition}
An \emph{Aronszajn compactum} is a
compact $X$ such that
$w(X) = \aleph_1$ and $\cchi(X) = \aleph_0$ and for some
\textup(equivalently, for all\,\textup) $Z \subseteq [0,1]^{\omega_1}$
homeomorphic to $X$, $Z$ is an embedded Aronszajn compactum.
\end{definition}

The next lemma is immediate from the definition.
Further closure properties of the class of Aronszajn compacta
are considered in Section \ref{sec-clos}.

\begin{lemma}
\label{lemma-subset}
A closed subset of an Aronszajn compactum is
either second countable or an Aronszajn compactum.
\end{lemma}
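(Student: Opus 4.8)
The plan is to work with a fixed embedding and reduce to a dichotomy on the weight of the closed subset. Since $X$ is an Aronszajn compactum, I may assume $X \subseteq [0,1]^{\omega_1}$ is an embedded Aronszajn compactum, with witnessing club $C \subseteq \omega_1$; this uses the ``for all'' half of the definition of Aronszajn compactum. Let $Y \subseteq X$ be closed. Then $Y$ is closed in $[0,1]^{\omega_1}$ as well, and since $Y$ is a subspace of $X$ we have $w(Y) \le w(X) = \aleph_1$. If $w(Y) \le \aleph_0$, then $Y$, being a compact Hausdorff space of countable weight, is metrizable and hence second countable, and we are done. So for the rest I assume $w(Y) = \aleph_1$ and aim to show that $Y$, with the inherited embedding $Y \subseteq [0,1]^{\omega_1}$, is an embedded Aronszajn compactum; by the ``for some'' half of the definition this will make $Y$ an Aronszajn compactum.

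First I would verify the two cardinal conditions. The weight is $\aleph_1$ by assumption. For the character, a local base at a point of $X$ meets $Y$ in a local base at that point of $Y$, so $\cchi(Y) \le \cchi(X) = \aleph_0$; since $Y$ is an infinite compactum it has a non-isolated point, forcing $\cchi(Y) = \aleph_0$.

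The heart of the argument is the comparison of fibers. Write $\LL^X_\alpha$ and $\LL^Y_\alpha$ for the sets $\LL_\alpha$ computed from $X$ and from $Y$ respectively, and note $Y_\alpha \subseteq X_\alpha$. For $y \in Y_\alpha$ the fiber of $Y$ over $y$ is $(\pi^{\omega_1}_\alpha \res Y)\iv\{y\} = (\pi^{\omega_1}_\alpha)\iv\{y\} \cap Y$, which is contained in the fiber of $X$ over $y$, namely $(\sigma^{\omega_1}_\alpha)\iv\{y\} = (\pi^{\omega_1}_\alpha)\iv\{y\} \cap X$, because $Y \subseteq X$. Hence if the $Y$-fiber over $y$ has more than one point so does the $X$-fiber, which shows $\LL^Y_\alpha \subseteq \LL^X_\alpha$ for every $\alpha$. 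In particular, for each $\alpha \in C$ the set $\LL^Y_\alpha$ is a subset of the countable set $\LL^X_\alpha$ and is therefore countable. Thus the same club $C$ witnesses that $Y$ is an embedded Aronszajn compactum, completing the argument.

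I do not expect a genuine obstacle here --- the statement is, as the text says, immediate from the definition --- so the work is entirely in the bookkeeping: using the ``for all / for some'' equivalence to move freely between the abstract and embedded notions, getting the weight dichotomy right (uncountable weight is not automatic, which is exactly why the conclusion is a disjunction), and confirming the fiber inclusion $\LL^Y_\alpha \subseteq \LL^X_\alpha$ together with the character computation. The one place to be slightly careful is that $\cchi(Y) = \aleph_0$ rather than merely $\cchi(Y) \le \aleph_0$, which needs the existence of a non-isolated point in the infinite compactum $Y$.
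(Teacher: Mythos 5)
Your proof is correct, and it is exactly the argument the paper has in mind when it says the lemma is ``immediate from the definition'': fix an embedding, split on whether $w(Y)$ is countable, and observe that the fibers of $Y$ over $Y_\alpha$ are contained in the fibers of $X$ over $X_\alpha$, so the same club $C$ works. The bookkeeping (the ``for some / for all'' equivalence, closedness of $Y$ in the cube, and the character of the subspace) is handled correctly, so there is nothing to add.
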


The Dedekind completion of an Aronszajn line is an Aronszajn compactum
(see Section \ref{sec-elem}),
and the associated tree is essentially
the same as the standard tree of closed intervals.
A special case of this is a compact Suslin line, which is
a well-known compact L-space; that is, it is 
HL (hereditarily Lindel\"of) and not
HS (hereditarily separable).  The line derived from a special
Aronszajn tree is much different topologically,
since it is not even ccc.

In Section \ref{sec-constr} we shall prove:

\begin{theorem}
\label{thm-Aronsz-basic}
Assuming $\diamondsuit$, there is an Aronszajn compactum  
which is both HS and HL.
\end{theorem}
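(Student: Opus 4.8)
The plan is to construct $X$ as the inverse limit of a continuous, surjective system $\langle X_\alpha,\ \sigma^\beta_\alpha : \alpha \le \beta < \omega_1\rangle$ of metric compacta, defined by recursion on $\alpha$, and then to take $X \subseteq [0,1]^{\omega_1}$ to be a copy of $\varprojlim_\alpha X_\alpha$ realized so that $X_\alpha = \pi^{\omega_1}_\alpha(X)$. At a limit stage one is forced to take the inverse limit, which is again a metric compactum since only countably many coordinates are involved; the only real freedom is at successor stages, where I would let $\sigma^{\alpha+1}_\alpha$ be one-to-one off a single fiber over a carefully chosen point $p_\alpha \in X_\alpha$. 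Making each nondegenerate fiber a two-point set (or a single convergent sequence with its limit) keeps $\cchi(X) = \aleph_0$ and forces each $\LL_\alpha$ to be countable, so that, granting $w(X) = \aleph_1$, the resulting $X$ is automatically an embedded Aronszajn compactum by Definition~\ref{def-emb-ar}. All the work goes into choosing the $p_\alpha$ and the fibers so that $X$ is also HS and HL.

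I would use the characterizations that a compactum is HL iff it has no uncountable right-separated subspace, and HS iff it has no uncountable left-separated subspace; so it suffices to preclude both kinds of $\omega_1$-sequence. The mechanism is reflection. If $D = \{d_\xi : \xi < \omega_1\}$ were left-separated (the right-separated case is dual), then each $d_\xi$ has a basic neighborhood witnessing that $d_\xi \notin \cl\{d_\eta : \eta < \xi\}$, and this neighborhood depends on only finitely many coordinates, all below some $\alpha_\xi < \omega_1$. Pressing down yields a stationary set on which $\alpha_\xi$ and the combinatorial ``pattern'' of the neighborhoods are constant, and on a club of $\alpha$ the projections $\sigma^{\omega_1}_\alpha(d_\xi)$ then cluster inside the metric compactum $X_\alpha$. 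Thus any genuine left- or right-separated sequence is faithfully mirrored, on a club, by its trace in some $X_\alpha$ together with a countable amount of coded data --- precisely the kind of object a $\diamondsuit$-sequence can anticipate.

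Accordingly I would fix a $\diamondsuit$-sequence and use it to guess, at each stage $\alpha$, a candidate subset $A_\alpha \subseteq X_\alpha$ (the reflected trace of a putative bad sequence) together with its coded neighborhood data. At the step $\alpha \to \alpha+1$ I would choose $p_\alpha$ to be a cluster point of $A_\alpha$ supplied by the guess and define the new fiber so as to sabotage the candidate: to defeat a left-separated candidate I would arrange that $p_\alpha$ becomes, in $X_{\alpha+1}$ and hence in $X$, a limit of earlier members of the candidate, placing a point of the sequence in the closure of its predecessors; to defeat a right-separated candidate I would dually force accumulation from its successors. Since $\diamondsuit$ guesses every real bad sequence correctly on a stationary set, and the system is continuous at limits, no uncountable left- or right-separated subspace can survive in $X$; hence $X$ is both HS and HL. The elementary-submodel reformulation promised in Section~\ref{sec-elem} is what lets one manage the ``trace plus coded data'' bookkeeping cleanly, by choosing $p_\alpha$ and the fiber inside a countable elementary submodel capturing the relevant $\diamondsuit$-guess.

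The main obstacle is carrying out the two tasks simultaneously without violating the standing hypotheses or undoing earlier work: each splitting must kill the currently guessed bad sequence while (i) preserving first countability, so that iterating the two-point fibers never blows up the character of a limit point, (ii) keeping every $\LL_\alpha$ countable, and (iii) not manufacturing, out of the points just introduced, a new uncountable left- or right-separated sequence. Point (iii) is the delicate one --- the accumulation deliberately created to defeat one candidate must be shown not to spawn a fresh bad sequence --- and this is exactly where the rigidity of the small (two-point or convergent) fibers, together with an elementary-submodel reflection verifying the final space at the end of the recursion, has to be used.
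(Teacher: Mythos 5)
Your skeleton (an inverse limit of metric compacta, one-to-one off a single fiber at each successor stage) matches the paper's, but the mechanism you propose for HS and HL breaks down at two points. First, the reflection step is false. For a left-separated sequence $\{d_\xi : \xi < \omega_1\}$, the least $\alpha_\xi$ bounding the support of a basic separating neighborhood of $d_\xi$ is not a regressive function of $\xi$ and need not be bounded on any uncountable set, so there is no pressing down. Indeed, nothing in your argument uses the particulars of the construction, so if it were valid it would prove that \emph{every} compact subspace of $[0,1]^{\omega_1}$ is HS and HL: the reflected trace would be an uncountable left- or right-separated subspace of the metric compactum $X_\alpha$, which cannot exist. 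But $2^{\omega_1}$ embeds as such a subspace and contains the uncountable discrete set $\{\chi_{\{\alpha\}} : \alpha < \omega_1\}$, where the separating neighborhood of $\chi_{\{\alpha\}}$ has support exactly $\{\alpha\}$ --- supports cofinal, no reflection. In the very space you are building the same thing happens: $d_\xi$ is typically separated from its predecessors only at stage roughly $\xi$. Second, the sabotage is inverted: arranging that $p_\alpha$ is a limit of the candidate's earlier members ``in $X_{\alpha+1}$ and hence in $X$'' is backwards, since continuous projections push closure relations from $X$ down to $X_{\alpha+1}$, never up; any limit relation created at stage $\alpha+1$ can be destroyed by later splittings --- destroying such relations is exactly what splitting does. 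So each act of sabotage imposes a commitment on \emph{all} subsequent stages, and this preservation problem (not your point (iii)) is the crux; your outline has no mechanism for it.

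The paper's proof supplies precisely that mechanism, and it changes what $\diamondsuit$ guesses: closed sets rather than sequences. Every closed $F \subseteq X$ is captured at some stage as $F_\alpha = \sigma^{\omega_1}_\alpha(F)$ (Condition (5)), and from then on every one-point splitting is performed \emph{irreducibly} over the preimage of each captured set: the splitting point $q_\beta$ must lie in the perfect kernel of any captured set containing it (Condition (16)), and the new fiber over $q_\beta$ consists only of limit points of the images of a sequence $r^n_\beta$ chosen inside that kernel (Condition (19)), which keeps $\sigma^\beta_\alpha$ irreducible over $(\sigma^\beta_\alpha)\iv(F_\alpha)$ for all $\beta \le \omega_1$ (Condition (4)). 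Irreducibility forces $(\sigma^{\omega_1}_\alpha)\iv(F_\alpha) = F$, so every closed set is a $G_\delta$ (hence $X$ is HL) and is separable (irreducible preimages of separable compacta are separable), and a compact HL space with all closed subsets separable is HS --- this is Lemma \ref{lemma-hshl}. A guess-and-kill scheme of the kind you describe can in principle be made to work (it is essentially Fedorchuk's fully closed maps technique), but the preservation machinery it requires is exactly what Conditions (4), (16), (19) provide and what your proposal omits.
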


Our construction is flexible enough 
to build in additional properties for the space and its associated tree,
which may be either Suslin or special; see Theorem \ref{thm-Aronsz-refined}.
The form of the tree is (up to club-isomorphism)
a topological invariant of $X$, but seems to be unrelated
to more conventional topological properties of $X$;
for example, $X$ may be totally
disconnected, or it may be connected and locally connected,
with $\dim(X)$ finite or infinite.

\begin{question}
Is there, in ZFC, an HL Aronszajn compactum?
\end{question}

We would expect a ZFC example to be both HS and HL.
Note that an Aronszajn compactum is dissipated in the sense of \cite{KU2},
so it cannot be an L-space
if there are no Suslin lines by Corollary 5.3 of \cite{KU2}.

To refute the existence of an HL Aronszajn compactum,
one needs more than just an Aronszajn tree of closed sets, 
since this much exists in the Cantor set:

\begin{proposition}
There is an Aronszajn tree $T$ whose nodes are closed subsets
of the Cantor set $2^\omega$.  The tree ordering is $\supset$,
with root $2^\omega$. Each level of $T$
consists of a pairwise disjoint family of sets.
\end{proposition}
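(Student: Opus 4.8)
The plan is to realize an abstract Aronszajn tree geometrically inside $2^\omega$. Fix, in ZFC, any Aronszajn tree $(T,<)$, which we may assume has a single root. I will define, by recursion on the level $\alpha<\omega_1$, for each $t\in T$ of level $\alpha$ a nonempty closed set $C_t\subseteq 2^\omega$, maintaining the inductive hypotheses that (i) each $C_t$ is homeomorphic to the Cantor set $2^\omega$; (ii) $C_s\supsetneq C_t$ whenever $s<t$; and (iii) $C_s\cap C_t=\emptyset$ whenever $s\neq t$ lie on the same level. The final tree is then $\{C_t:t\in T\}$ ordered by $\supseteq$, with root $C_{\text{root}}=2^\omega$.

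Granting the construction, the conclusions follow quickly. The map $t\mapsto C_t$ is an order isomorphism from $(T,<)$ onto $(\{C_t\},\supsetneq)$: if $s,t$ are incomparable in $T$, let $r$ be the predecessor of $t$ on the level of $s$; then $r\neq s$, so by (iii) $C_r\cap C_s=\emptyset$, and since $C_t\subseteq C_r$ by (ii) we get $C_s\cap C_t=\emptyset$, whence neither set contains the other. Thus comparability of nodes is exactly inclusion of the corresponding sets, and the family is laminar (any two members are disjoint or nested). Consequently $\{C_t:t\in T\}$ ordered by $\supseteq$ is a tree isomorphic to $T$, hence Aronszajn; its levels are the countable sets $\{C_t:\mathrm{lev}(t)=\alpha\}$, pairwise disjoint by (iii); and $2^\omega$ is the root.

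The successor and limit steps are where the work lies. At a successor step, given $C_s\cong 2^\omega$, I partition $C_s$ into the (at most countably many) disjoint clopen-in-$C_s$ Cantor sets required for the immediate successors of $s$; since distinct level-$\alpha$ sets are already disjoint, disjointness of the whole next level follows. The delicate point, and the main obstacle, is the limit step: for $u$ of limit level $\lambda$ with predecessor chain $\langle s_\xi:\xi<\lambda\rangle$, I must take $C_u\subseteq\bigcap_{\xi<\lambda}C_{s_\xi}$ while preserving hypothesis (i). Compactness guarantees the intersection is nonempty and closed, but a careless choice of the nested clopen pieces can collapse it to a single point, which cannot be split further and would kill the branch and, cumulatively, the whole tree.

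To overcome this I will strengthen the recursion by a fusion/reservation scheme that keeps a Cantor set alive along every branch. The idea is to fix coordinates of $2^\omega$ only from a co-infinite ``working'' set, permanently leaving an infinite ``reserved'' set of coordinates free, so that a decreasing sequence of such Cantor sets meets in a set of the form $\{x:\text{prescribed on the fixed coordinates}\}\cong 2^{\mathrm{reserved}}$, again a Cantor set. Because an arbitrary countable ordinal has branches passing through arbitrarily many nested limit stages, the bookkeeping must, at each limit, split the current reserve into a new working set and a new reserve while ensuring that the intersection of the reserves down any branch stays infinite; this is arranged by partitioning the free coordinates into infinitely many infinite blocks and always retaining cofinally many untouched blocks, so that even at limits of limits an infinite reserve, hence a nonempty Cantor set $C_u$, survives. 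With this invariant in place the recursion runs through all $\alpha<\omega_1$, completing the construction and the proof.
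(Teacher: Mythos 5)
Your reduction of the problem to realizing an \emph{arbitrary} Aronszajn tree $(T,<)$ by Cantor subsets of $2^\omega$ cannot work, and the failure sits exactly at the step you flag as delicate but then only assert: the limit-stage bookkeeping. In fact \emph{no} reservation scheme whatsoever can succeed for every Aronszajn tree. Suppose $T$ is Suslin (such $T$ exist under $\diamondsuit$, the hypothesis in force in the paper's constructions) and suppose you had sets $C_t$ satisfying your (i)--(iii), so in particular $C_t \subsetneq C_s$ whenever $s<t$. For each node $t$ at a successor level, with immediate predecessor $t^-$, choose a basic clopen $U_t \subseteq 2^\omega$ with $U_t \cap C_{t^-} \neq \emptyset$ and $U_t \cap C_t = \emptyset$; this is possible since $C_t$ is closed and $C_{t^-} \setminus C_t \neq \emptyset$. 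If $s<t$ are successor-level nodes with $U_s = U_t = U$, then $s \le t^-$, so $C_{t^-} \subseteq C_s$, whence $\emptyset \neq U \cap C_{t^-} \subseteq U \cap C_s = \emptyset$, a contradiction. So each set $\{t : U_t = U\}$ is an antichain, and the uncountably many successor-level nodes of $T$ are covered by countably many antichains. In a Suslin tree all antichains are countable --- contradiction. (Equivalently: forcing with $T$ is ccc and adds an uncountable branch, which would become a strictly decreasing $\omega_1$-sequence of closed subsets of $2^\omega$, impossible in any model by the countable-base pigeonhole.) Thus for Suslin $T$ your recursion must get stuck, and since successor steps never get stuck, it is some limit node whose predecessor chain gets squeezed too small; this also matches the concrete trouble one finds when one tries to push your block scheme past limits of limits, where the accumulated reserves either get consumed or swallow all usable coordinates. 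The argument above in fact shows that the trees realizable in your sense are essentially exactly the \emph{special} Aronszajn trees, so your bookkeeping is implicitly an attempt to specialize $T$ on the fly, which is impossible in general.

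The proposal can be repaired, but it then becomes a different proof. One option is the paper's own route, which does not realize a given tree at all: it cites Todor\v cevi\'c's construction (Theorem 4 of Galvin--Miller), where the tree of perfect sets is built level by level and at each limit level one continues only countably many chains, each chosen in advance as a fusion sequence, so one never needs \emph{every} chain to survive. The other option keeps your realization idea but starts from a \emph{special} Aronszajn tree $T$ (these exist in ZFC) and lets the specializing map do the work your blocks cannot: fix an order-preserving $f : T \to \mathbb{Q}$ that is injective on each level (easily arranged), index the Cantor set as $2^{\mathbb{Q}}$, and set $F_t = \{q \in \mathbb{Q} : q \le f(t)\}$, $\sigma_t(q) = 1$ iff $q = f(s)$ for some $s \le t$, and $C_t = \{x \in 2^{\mathbb{Q}} : x \upharpoonright F_t = \sigma_t\}$. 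Each $C_t$ is then a Cantor set; $s<t$ gives $C_s \supsetneq C_t$; distinct nodes on a level have incompatible $\sigma$'s, hence disjoint $C$'s (compare values at the first level where their chains diverge, or at the coordinate $f(t)$ itself if they share the same predecessors); and the limit stages now come free of charge, because every $F_t$, and every union of $F_s$ along a chain, is a bounded initial segment of $\mathbb{Q}$ and hence coinfinite. Prepending a root assigned $(\emptyset,\emptyset)$ gives root $2^{\mathbb{Q}} \cong 2^\omega$, completing a correct ZFC proof along the lines you intended.
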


The proof is like that of Theorem 4 of Galvin and Miller \cite{GM},
which is attributed there to Todor\v cevi\'c.

\section{Elementary Submodels}
\label{sec-elem}

We consider Aronszajn compacta from the point of view
of elementary submodels.
Assume that $X$ is compact, with $X$ (and its topology)
in some suitably large $H(\theta)$.
If $X$ is first countable, so that $|X| \le \cccc$ and its
topology is a set of size $\le 2^\cccc$, then $\theta$ can
be any regular cardinal larger than $2^\cccc$,
assuming that the set $X$ is chosen so that its transitive
closure has size $\le \cccc$.

If $X \in M \prec H(\theta)$, 
then there is a natural quotient map $\pi = \pi_M: X \onto X/M$
obtained by identifying two points of $X$ iff they are not
separated by any function in $C(X,\RRR) \cap M$.
Furthermore, $X/M$ is second countable whenever $M$ is countable.

\begin{lemma}
\label{lemma-el}
Assume that $X$ is compact,
$w(X) = \aleph_1$, and $\cchi(X) = \aleph_0$.
Then the following are equivalent:
\begin{itemizz}
\item[1.] $X$ is an Aronszajn compactum.
\item[2.] Whenever $M$ is countable and
$X \in M \prec H(\theta)$,
there are only countably many $y \in X/M$
such that $\pi\iv\{y\}$ is not a singleton.
\item[3.] $(2)$ holds for all $M$ in some club of
countable elementary submodels of $H(\theta)$.
\end{itemizz}
\end{lemma}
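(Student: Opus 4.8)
The plan is to connect the elementary-submodel quotient $\pi_M \colon X \onto X/M$ with the coordinate projections $\sigma^{\omega_1}_\alpha$ of Definition~\ref{def-emb-ar}. Since all three conditions are invariant under homeomorphism --- condition~(1) by Lemma~\ref{lemma-embed} together with the quoted ``for some $=$ for all'', and conditions~(2),(3) because $X/M$ is defined intrinsically from $C(X,\RRR)\cap M$ --- I may fix one embedding and assume outright that $X \subseteq [0,1]^{\omega_1}$ is closed, so that $X\in M$ puts the cube and its coordinate maps into $M$ as well. For a countable $M \prec H(\theta)$ with $X \in M$, write $\delta = \delta_M := M \cap \omega_1$, a countable ordinal. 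The heart of the argument is the claim
\[
\text{for all } x,x' \in X: \quad \pi_M(x) = \pi_M(x') \iff \sigma^{\omega_1}_\delta(x) = \sigma^{\omega_1}_\delta(x').
\]
Granting this, the induced homeomorphism $X/M \cong X_\delta$ carries $\{y : |\pi_M\iv\{y\}| > 1\}$ bijectively onto $\LL_\delta$, so condition~(2) for a given $M$ is equivalent to the assertion that $\LL_{\delta_M}$ is countable.

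To prove the claim, first note that each coordinate function $f_\xi \colon X \to [0,1]$, $f_\xi(x)=x(\xi)$, for $\xi < \delta$ lies in $C(X,\RRR)\cap M$, since it is definable from $X$ and $\xi \in M$; hence $\pi_M(x)=\pi_M(x')$ forces $x(\xi)=x'(\xi)$ for all $\xi<\delta$, which is the forward direction. For the converse I would invoke the standard fact that every $f \in C(X,\RRR)$ depends on only countably many coordinates: covering the compactum $X$ by finitely many basic open sets on which $f$ oscillates by less than $1/n$ produces a finite $S_n \subseteq \omega_1$, and $f$ factors through the projection onto the coordinates in $S := \bigcup_n S_n$. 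Applying this inside $H(\theta)$ to an $f \in C(X,\RRR)\cap M$, elementarity yields such a countable support $S \in M$; being a countable member of $M$, $S \subseteq M\cap\omega_1 = \delta$. Thus $f$ factors through $\sigma^{\omega_1}_\delta$ and is constant on its fibers, so $\sigma^{\omega_1}_\delta(x)=\sigma^{\omega_1}_\delta(x')$ implies $f(x)=f(x')$ for every such $f$, i.e.\ $\pi_M(x)=\pi_M(x')$.

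With the claim in hand the three implications follow. For $(1)\Rightarrow(2)$: given any countable $M \ni X$, condition~(1) makes ``$\exists$ a club $C$ with $\LL_\alpha$ countable for every $\alpha\in C$'' a true statement whose parameters lie in $M$, so by elementarity such a $C$ may be chosen in $M$; closedness of $C$ then gives $\delta_M \in C$, whence $\LL_{\delta_M}$ is countable and, by the claim, (2) holds for $M$. The implication $(2)\Rightarrow(3)$ is immediate, since the countable $M \prec H(\theta)$ with $X\in M$ already form a club. Finally, for $(3)\Rightarrow(1)$, intersect the given club $\mathcal C$ with the club of such $M$; for each $M \in \mathcal C$ the claim turns (2) into ``$\LL_{\delta_M}$ is countable'', and the trace set $\{\delta_M : M \in \mathcal C\}$ contains a club $C \subseteq \omega_1$, so $\LL_\alpha$ is countable for all $\alpha \in C$ and $X$ is an embedded Aronszajn compactum.

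The main obstacle is the claim, and within it the converse direction: controlling an arbitrary $f \in C(X,\RRR)\cap M$ by the coordinates below $\delta_M$. This is exactly where both the ``countably many coordinates'' theorem and the reflection of a witnessing support into $M$ do the real work; the surrounding bookkeeping (the reflection of a club in $(1)\Rightarrow(2)$, and the trace-of-a-club facts in $(3)\Rightarrow(1)$) is standard.
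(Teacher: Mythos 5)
Your core argument is the right one, and it is exactly what the paper leaves implicit: for a countable $M \prec H(\theta)$ with an \emph{embedded} $X \in M$, the $\pi_M$--classes coincide with the fibers of $\sigma^{\omega_1}_{\delta_M}$ (coordinate functions below $\delta_M$ lie in $M$; every $f \in C(X,\RRR) \cap M$ has a countable support $S \in M$, hence $S \subseteq M \cap \omega_1 = \delta_M$), so condition (2) at $M$ says precisely that $\LL_{\delta_M}$ is countable. Your handling of $(1)\to(2)$ (reflect a witnessing club $C$ into $M$, so $\delta_M \in C$) is the paper's one-line hint, and your $(3)\to(1)$ via the trace $\{\delta_M : M \in \mathcal C\}$ containing a club is the standard completion.

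There is, however, a genuine gap in your opening reduction. You assert that conditions (2) and (3) are homeomorphism-invariant ``because $X/M$ is defined intrinsically from $C(X,\RRR)\cap M$,'' and on that basis replace the abstract $X$ of the lemma by an embedded copy. Intrinsicness does not give this: (2) and (3) quantify over models containing the \emph{particular set} $X$, and a model containing $X$ need not contain the embedded copy $Z$ or any homeomorphism $X \to Z$ (nor conversely), so there is no direct way to match $X/M$ with a quotient of $Z$. For (3) this is repairable by intersecting clubs so that the relevant models contain $X$, $Z$, and a homeomorphism $h$ (then $f \mapsto f \circ h$ shows the nontrivial fibers of $X/M$ and $Z/M$ correspond); but for (2), whose quantifier runs over \emph{all} countable $M \ni X$, invariance is really a \emph{consequence} of the lemma (via the manifestly invariant condition (1)), not something available before proving it. As written, your argument establishes (2) only for the embedded copy, not for the given $X$. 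The fix is short and uses the same elementarity device you already use twice (reflecting $C$, reflecting $S$): given an arbitrary countable $M$ with $X \in M \prec H(\theta)$, elementarity yields a closed $Z \subseteq [0,1]^{\omega_1}$ and a homeomorphism $h : X \to Z$ with $Z, h \in M$; since $h \in M$, points of $X$ are separated by $C(X,\RRR)\cap M$ iff their $h$-images are separated by $C(Z,\RRR)\cap M$, so the nontrivial $\pi_M$--fibers of $X$ and of $Z$ are in bijection; now run your claim and the club reflection on $Z$. With that adjustment the cycle $(1)\to(2)\to(3)\to(1)$ goes through for the abstract $X$ exactly as you wrote it.
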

\begin{proof}
For $(1)\to(2)$, note that $X \in M \prec H(\theta)$ implies
that $M$ contains some club satisfying Definition \ref{def-emb-ar}.
\end{proof}

For example, say that $X$ is a compact first countable LOTS.
Then the equivalence classes are all convex; and,
if $x < y$ then $\pi(x) = \pi(y)$ iff $[x,y] \cap M = \emptyset$.
Now consider Aronszajn lines:

\begin{definition}
\label{def-ar-line}
A \emph{compacted Aronszajn line} is a compact LOTS $X$
such that $w(X) = \aleph_1$ and $\cchi(X) = \aleph_0$
and the closure of every countable
set is second countable.
\end{definition}

By $\cchi(X) = \aleph_0$,
there are no increasing or decreasing $\omega_1$--sequences.
Note that our definition allows for the possibility
that $X$ contains uncountably many disjoint
intervals isomorphic to $[0,1]$.
The term ``\emph{compact} Aronszajn line'' is not
common in the literature.  An \emph{Aronszajn line} is usually
defined to be a LOTS of size $\aleph_1$
with no increasing or decreasing $\omega_1$--sequences
and no uncountable subsets of \emph{real type}
(that is, order-isomorphic to a subset of $\RRR$).
Such a LOTS cannot be compact; the Dedekind completions
of such LOTSes are the compact\emph{ed} Aronszajn lines of
Definition \ref{def-ar-line}.

\begin{lemma}
\label{lemma-ar-line}
A LOTS $X$ is an Aronszajn compactum iff $X$ is a compacted Aronszajn line.
\end{lemma}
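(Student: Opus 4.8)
The plan is to route everything through the submodel criterion of Lemma \ref{lemma-el}. Both notions include the standing hypotheses that $X$ is compact with $w(X)=\aleph_1$ and $\cchi(X)=\aleph_0$, so I only need to match the two extra conditions. By Lemma \ref{lemma-el}(2), $X$ is an Aronszajn compactum iff for every countable $M\prec H(\theta)$ with $X\in M$ the quotient $\pi_M$ has only countably many non-singleton fibers. On the other side, writing $K_M:=\cl(M\cap X)$ (a compact separable GO-space, with $M\cap X$ a countable dense set), I would first observe that ``the closure of every countable set is second countable'' is equivalent to ``$K_M$ is second countable for every such $M$'': an arbitrary countable $D\subseteq X$ can be absorbed into some countable $M$, making $\cl(D)\subseteq K_M$ a closed, hence second countable, subspace, while conversely $M\cap X$ is itself countable. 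Thus the lemma reduces, for each fixed $M$, to the equivalence of ``$\pi_M$ has countably many non-singleton fibers'' with ``$K_M$ is second countable''.

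For the heart of the matter I would set up a correspondence, exact up to $\aleph_0$, between the non-singleton fibers of $\pi_M$ and the \emph{jumps} (pairs of consecutive points) of $K_M$. Recall from the discussion after Lemma \ref{lemma-el} that for a LOTS the fibers are convex and, for $x<y$, $\pi_M(x)=\pi_M(y)$ iff $[x,y]\cap M=\emptyset$; so a non-singleton fiber is a convex set $C$ disjoint from $M$ with $\inf C<\sup C$. The workhorse observation is that such a $C$ meets $K_M=\cl(M\cap X)$ only in $\inf C$ and $\sup C$, since an interior point of $C$ lying in $\cl(M\cap X)$ would have points of $M$ on both sides of it, inside $C$. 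Consequently $(\inf C,\sup C)$ misses $K_M$ and lies in a unique complementary interval of $K_M$, i.e.\ a unique jump; as the whole of a jump is a single fiber, the assignment $C\mapsto\text{jump}$ is injective. In the other direction, because $M\cap X$ is countable, all but countably many jumps $(a,b)$ of $K_M$ have both endpoints outside $M$, and then $[a,b]\cap M=\emptyset$ forces $\pi_M(a)=\pi_M(b)$, so $[a,b]\cap X$ is a non-singleton fiber; this map is injective as well. Hence the two cardinalities differ by at most $\aleph_0$.

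It then remains to prove that a compact separable GO-space $K$ is second countable iff it has only countably many jumps, applied to $K=K_M$. For the nontrivial forward implication, each jump $(a,b)$ makes $(\leftarrow,a]=(\leftarrow,b)$ open, so a basic open $B_a\ni a$ chosen inside it has $a=\max B_a$, yielding an injection of the jumps into a countable base; the converse merely adjoins the countably many jump-endpoints to the dense set $M\cap X$ to build a countable base of order intervals. Combining this with the previous paragraph settles both directions of the reduced equivalence. The step I expect to be the main obstacle is precisely the correspondence of the second paragraph, where the danger is that a jump of $K_M$ need not be a genuine gap of $X$ (the interval $(a,b)$ may be empty or a single point in $X$) and that distinct jumps might a priori collapse into one fiber; both dangers are defused by the observation that a convex set disjoint from $M$ meets $\cl(M\cap X)$ only at its two endpoints, which simultaneously guarantees that empty-in-$X$ jumps still yield the two-point fibers $\{a,b\}$ and that no single fiber can swallow two jumps.
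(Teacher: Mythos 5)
Your overall plan---reducing via Lemma \ref{lemma-el} to a per-$M$ equivalence and then matching non-singleton fibers of $\pi_M$ against jumps of $K_M=\cl(M\cap X)$---is sound, and it is genuinely different from the paper's argument, which writes out only the direction ``compacted Aronszajn line $\Rightarrow$ Aronszajn compactum'' and does so by contradiction: given uncountably many non-trivial fibers $[a_y,b_y]$, $y\in E\subseteq X/M$, it picks a countable dense $D\subseteq E$ and shows that $\cl(\{a_y: y\in D\})$ is the closure of a countable set which is not second countable. However, your proof has a genuine gap in the fibers-to-jumps direction. A non-singleton fiber $C$ may itself be a jump of $X$, i.e.\ $C=\{a,b\}$ with $a,b$ adjacent in $X$. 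Then $(\inf C,\sup C)=\emptyset$, and your assignment ``the unique complementary interval of $K_M$ containing $(\inf C,\sup C)$'' is undefined: the empty set lies inside every complementary interval. Your workhorse observation shows only that $C$ meets $K_M$ \emph{at most} in its endpoints; it never shows the endpoints actually \emph{lie in} $K_M$, which is exactly what is needed to attach a jump of $K_M$ to such a $C$. This is not a fringe case: in a double-arrow-type space every non-singleton fiber is of this two-point form, so as written your injection says nothing precisely where ``uncountably many fibers $\Rightarrow$ uncountably many jumps'' carries all the content. Your closing paragraph defuses degenerate cases only for the opposite (jumps-to-fibers) direction, not this one.

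The repair is short but uses elementarity, which your argument never invokes. Claim: if $C$ is a non-singleton fiber then $\inf C,\sup C\in K_M$. Indeed, $a:=\inf C\notin M$ (a fiber with two points is disjoint from $M$), and for every $x<a$ we have $\pi_M(x)\neq\pi_M(a)$, so $[x,a]\cap M\neq\emptyset$ and hence $[x,a)\cap M\neq\emptyset$. If $a$ had an immediate predecessor $p$, taking $x=p$ forces $p\in M$; but then $a$, definable from $p$ and $X$ as the immediate successor of $p$, lies in $M$ by elementarity of $M\prec H(\theta)$, a contradiction. Likewise $a\neq\min X$, since $\min X\in M$. So every neighborhood of $a$ contains points of $M$ below $a$, i.e.\ $a\in K_M$, and symmetrically $\sup C\in K_M$. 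Now \emph{every} non-singleton fiber $C$, two-point or not, determines the genuine jump $(\inf C,\sup C)$ of $K_M$, and injectivity is immediate since distinct fibers are disjoint. (Alternatively, one can avoid elementarity by noting that each exceptional fiber requires $\min X$, $\max X$, or an immediate neighbor in $M$ of one of its endpoints, so there are at most countably many exceptions, which is all your counting needs.) With this one step supplied, the rest of your proof---the reduction to $K_M$, the jumps-to-fibers injection off a countable set, and the characterization of second countability of a compact separable LOTS by countability of its set of jumps---is correct as you state it.
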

\begin{proof}
For $\leftarrow$: suppose that
$X \in M \prec H(\theta)$ and $M$ is countable.
Then $X/M$ is a compact metric LOTS, and is hence order-embeddable
into $[0,1]$.  Suppose there were an uncountable $E \subseteq X/M$
such that $|\pi\iv\{y\}| \ge 2$ for all $y \in E$.
Say $\pi\iv\{y\} = [a_y, b_y] \subset X$ for $y \in E$,
where $a_y < b_y$.  If $D$ is a countable dense subset of $E$
then $\cl(\{a_y : y \in D\}) \subseteq X$ would not be second
countable, a contradiction.
\end{proof}

We use the standard definition of a \emph{Suslin line} as any
LOTS which is ccc and not separable; this is always an L-space.
Then a \emph{compact Suslin line} is just a Suslin line which
happens to be compact.  A compacted Aronszajn line may be a Suslin line,
but a compact Suslin line need not be a
compacted Aronszajn line.  For example, we may form $X$
from a connected compact Suslin line $Y$
by doubling uncountably many points lying in some Cantor subset
of $Y$.  More generally, 

\begin{lemma}
\label{lemma-sus-ar}
Let $X$ be a compact Suslin line.  Then $X$ is 
a compacted Aronszajn line iff
$D := \{x \in X : \exists y > x \,( [x,y] = \{x,y\} \}$
does not contain an uncountable subset of real type.
\end{lemma}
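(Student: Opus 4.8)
The plan is to reduce the statement to a single property---that $\cl(A)$ is second countable for every countable $A \subseteq X$---and then to match that property with the condition on $D$ by analysing jumps. First, the two remaining defining properties of a compacted Aronszajn line are automatic here. Since $X$ is ccc it has no strictly monotone $\omega_1$-sequence: the gaps $(x_\alpha, x_{\alpha+1})$ of such a sequence are pairwise disjoint, so all but countably many are empty, and among the co-countably many empty ones there are uncountably many consecutive pairs, each forcing an isolated point and violating ccc. A point of uncountable character would produce such a sequence, so $\cchi(X) = \aleph_0$; and $w(X) = \aleph_1$ is part of the standard theory of Suslin lines. Thus $X$ is a compacted Aronszajn line iff $\cl(A)$ is second countable for every countable $A$.

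Next I would reduce second countability to counting jumps. For $x \in D$ write $x^+$ for its immediate successor. For countable $A$, the set $\cl(A)$ is a closed, hence compact, LOTS in its order topology, and is separable; a separable compact LOTS is second countable iff it has only countably many jumps (each jump $\{p,p^+\}$ gives a distinct clopen ray $(\leftarrow,p^+)$, and a second countable compact space has only countably many clopen sets; conversely $A$ together with the jump points yields a countable base). A jump of $\cl(A)$ is either a jump already present in $X$ with both endpoints in $\cl(A)$, or a ``new'' jump, namely a maximal nonempty open interval of $X \setminus \cl(A)$ with both endpoints in $\cl(A)$; by ccc there are only countably many new jumps. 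So $\cl(A)$ fails to be second countable iff uncountably many jumps of $X$ survive, i.e. uncountably many $x \in D$ have both $x, x^+ \in \cl(A)$.

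For $(\Rightarrow)$ I argue contrapositively. Given an uncountable $E \subseteq D$ of real type, fix a countable order-dense $A \subseteq E$. A real-type order has only countably many internal jumps, so all but countably many $x \in E$ are left-limits of $E$; for each such $x$, order-density gives $x = \sup_X\{a \in A : a < x\} \in \cl(A)$, while $r_x := \inf_X\{a \in A : a > x\} \ge x^+ > x$ (no point lies strictly between $x$ and $x^+$) and $(x,r_x) \cap A = \emptyset$, so $\{x, r_x\}$ is a jump of $\cl(A)$. These jumps have distinct left endpoints, so $\cl(A)$ has uncountably many jumps and is not second countable. For $(\Leftarrow)$, again contrapositively: if some $\cl(A)$ is not second countable, then after discarding the countably many new jumps the set $J \subseteq D$ of left endpoints of surviving jumps is uncountable. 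Enumerating $A = \{a_n : n < \omega\}$, the map $p \mapsto \sum_{a_n \le p} 3^{-n}$ is order-preserving on $J$ and becomes injective once the countably many $p \in J$ isolated in $\cl(A)$ are removed (two points of $J$ collapse only when they are adjacent with the upper one outside $A$, which makes that upper point isolated). This order-embeds an uncountable subset of $D$ into $\RRR$, contradicting the hypothesis.

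The main obstacle is the behaviour of jumps under closure. I must use ccc to see that passing to $\cl(A)$ creates only countably many new jumps, and, in the harder $(\Leftarrow)$ direction, extract a genuine order-embedding of uncountably many surviving jump points into $\RRR$ from the single countable dense set $A$; here the careful accounting of which points become isolated in $\cl(A)$ is the delicate step.
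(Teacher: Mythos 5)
Your proposal takes a genuinely different route from the paper's: the paper runs both directions through the elementary-submodel characterization (Lemmas \ref{lemma-ar-line} and \ref{lemma-el}), whereas you work straight from Definition \ref{def-ar-line} and analyse jumps of $\cl(A)$. Your $(\Leftarrow)$ direction is essentially sound (in fact the map $p \mapsto \sum_{a_n \le p} 3^{-n}$ is injective on \emph{all} of $J$: if $A \cap (p,p'] = \emptyset$ then, since $(p',p'^{+})=\emptyset$, the open interval $(p, p'^{+})$ is a neighborhood of $p'$ missing $A$, contradicting $p' \in \cl(A)$; so your discarding of isolated points is unnecessary, though harmless). The genuine gap is in $(\Rightarrow)$: the inference ``order-density gives $x = \sup_X\{a \in A : a < x\} \in \cl(A)$'' is invalid. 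Order-density of $A$ in $E$ and $x$ being a left-limit of $E$ are facts about the internal order of $E$; they do not prevent $X$ from containing a block of points lying strictly above every element of $A \cap (\leftarrow,x)$ and strictly below $x$, in which case $\sup_X\{a \in A : a < x\} < x$, $x \notin \cl(A)$, and the jump $\{x,x^{+}\}$ does not survive into $\cl(A)$. What rescues the step is ccc, not order-density: if $(s_x,x) \cap A = \emptyset$ for some $s_x < x$, then (using density of $A$ in $E$ and that $x$ is a left-limit of $E$) also $(s_x,x) \cap E = \emptyset$; hence for distinct failing $x$'s these intervals are pairwise disjoint, so only countably many are nonempty, and an empty one makes $x$ isolated in $X$, which ccc also allows only countably often. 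This is exactly the kind of ``delicate accounting'' you flagged---but you flagged it only for $(\Leftarrow)$, and without it $(\Rightarrow)$ is unproved. The paper's forward direction avoids the issue entirely: with $B \subseteq E$ countable dense and $X, B \in M \prec H(\theta)$, the $M$-class of $x$ is nontrivial as soon as $x, x^{+} \notin M$ (because $[x,x^{+}] = \{x,x^{+}\}$), with no limit argument at all; density of $B$ is used only to see that distinct $x$'s give distinct classes.

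A second, smaller gap: ``$w(X)=\aleph_1$ is part of the standard theory of Suslin lines'' is not true under the paper's definition (ccc and not separable). If CH fails and there is a Suslin continuum $S$ (jointly consistent), then doubling every point of $S$, i.e.\ taking $S \times \{0,1\}$ lexicographically, yields a compact, ccc, non-separable LOTS with at least $|S| \ge \cccc$ jumps, hence of weight $\ge \cccc > \aleph_1$. (This does not refute the Lemma: here $D \cong S$, and every Suslin continuum does contain uncountable real-type subsets, as one sees from its quotient onto $[0,1]$ by a countable elementary submodel, all but countably many fibers of which are singletons by ccc plus connectedness.) So your reduction to ``every countable $A$ has $\cl(A)$ second countable'' needs an additional argument that the hypothesis on $D$ forces $w(X) \le \aleph_1$. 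In fairness, the paper's converse glosses the same point---Lemma \ref{lemma-el} is stated only for $w(X)=\aleph_1$---but your write-up asserts as standard a fact that is provable only under CH, whereas the fix (e.g.\ a chain of countable elementary submodels whose union separates points, using ccc to stop the associated intervals from shrinking $\omega_1$ times) is not supplied.
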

\begin{proof}
Note that $D$ is the set of all points
with a right nearest neighbor.
If $D$ contains an uncountable set $E$ real type, 
let $B \subseteq E$ be countable and dense in $E$.
Then whenever $M$ is countable and
$X,B \in M \prec H(\theta)$,
there are uncountably many $y \in X/M$
such that $|\pi\iv\{y\}| \ge 2$,
so that $X$ is not an Aronszajn compactum.

Conversely, if $X$ is not an Aronszajn compactum,
consider any countable $M$ with
$X \in M \prec H(\theta)$ and
$A := \{y \in X/M : |\pi\iv\{y\}| \ge 2\}$ uncountable.
Let $A' := \{y \in X/M : |\pi\iv\{y\}| > 2\}$.
Since each $\pi\iv\{y\}$ is convex,
$A'$ is countable by the ccc, and the left points
of the $\pi\iv\{y\}$ for $y \in A \backslash A'$ yield an uncountable
subset of $D$ of real type.
\end{proof}

A zero dimensional compact Suslin line formed
in the usual way from a binary Suslin tree will
also be a compacted Aronszajn line.

\section{Normalizing Aronszajn Compacta}
\label{sec-norm}
The club $C$ and tree $T$ derived from an Aronszajn
compactum $X$ in Definition \ref{def-emb-ar} can depend on the
embedding of $X$ into $[0,1]^{\omega_1}$.
To standardize the tree, we  choose a nice embedding.  
For $X \subseteq [0,1]^{\omega_1}$, $C$ cannot in general be $\omega_1$,
since $C = \omega_1$ implies that $\dim(X) \le 1$.
Replacing  $[0,1]$ by the Hilbert cube, however, 
we can assume $C = \omega_1$, which simplifies our tree notation.
In particular, the levels will be indexed by $\omega_1$, 
so that $\LL_\alpha$ will be level $\alpha$ of the tree
in the usual sense.

\begin{definition}
\label{def-cube}
$Q$ denotes the Hilbert cube, $[0,1]^\omega$. If
$X \subseteq Q^{\omega_1}$ is closed and $\alpha< \omega_1$, then
$\LL_\alpha = \LL_\alpha(X)=
\{x \in X_\alpha : |(\sigma^{\omega_1}_\alpha)\iv\{x\}| > 1\}$.
$\WW(X) = \{\alpha< \omega_1: |\LL_\alpha| \le \aleph_0\}$.
\end{definition}

So, $X$ is an Aronszajn compactum iff $\WW(X)$ contains a club;
$\WW(X)$ itself need not be closed, and $\WW(X)$ depends
on how $X$ is embedded into $ Q^{\omega_1}$.
Now, using the facts that $Q \cong Q^\omega$ and that
an Aronszajn tree can have only countably many finite levels:

\begin{lemma}
\label{lemma-nf}
Every Aronszajn compactum is homeomorphic to some $X \subseteq Q^{\omega_1}$
such that $\WW(X) = \omega_1$ and
$|\LL_\alpha| = \aleph_0$ for all $\alpha > 0$.
\end{lemma}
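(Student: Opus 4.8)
The plan is to produce the required embedding by a single \emph{reblocking} of coordinates. Fix an embedded copy $X\subseteq[0,1]^{\omega_1}$ of our Aronszajn compactum, together with a club $C\subseteq\WW(X)$ as in Definition \ref{def-emb-ar}, so that $\langle T(X),\tleft\rangle$ is an Aronszajn tree with levels $\LL_\alpha(X)$ ($\alpha\in C$). I will partition the $\omega_1$ coordinates into $\omega_1$ consecutive, countably infinite blocks $I_\xi$, with the block boundaries running along a carefully chosen sub-club of $C$, and then set $Y:=\Phi(X)$ where $\Phi(x)=\langle x\res I_\xi:\xi<\omega_1\rangle$. Since each $I_\xi$ is countably infinite, $[0,1]^{I_\xi}\cong[0,1]^\omega=Q$, so $\Phi$ is a homeomorphism of $[0,1]^{\omega_1}$ onto $\prod_\xi[0,1]^{I_\xi}\cong Q^{\omega_1}$; thus $Y$ is a compact, hence closed, homeomorphic copy of $X$ in $Q^{\omega_1}$. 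Everything reduces to choosing the boundaries correctly.

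First I would locate the boundaries. Because an Aronszajn tree has only countably many finite levels, the set $\{\alpha\in C:\LL_\alpha(X)\text{ is finite}\}$ is countable, and a countable subset of $\omega_1$ is bounded (as $\operatorname{cf}(\omega_1)>\omega$); fix $\gamma<\omega_1$ above it, so that $\LL_\alpha(X)$ is infinite for every $\alpha\in C$ with $\alpha>\gamma$. The three sets $C$, $\Lim$, and the tail $(\gamma,\omega_1)$ are all clubs, so $D:=\{0\}\cup\bigl(C\cap\Lim\cap(\gamma,\omega_1)\bigr)$ is a club; let $\{d_\xi:\xi<\omega_1\}$ be its increasing enumeration, so $d_0=0$ and, for every $\xi\ge 1$, $d_\xi\in C$ is a limit ordinal with $d_\xi>\gamma$. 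Finally set $I_\xi:=[d_\xi,d_{\xi+1})$. Since each $d_{\xi+1}$ is a limit ordinal strictly above $d_\xi$, every $I_\xi$ is countably infinite, and the $I_\xi$ partition $\omega_1$.

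It remains to compute the levels of $Y$ (in the sense of Definition \ref{def-cube}). The key identity is that, by continuity of the enumeration of $D$ at limit stages, $\bigcup_{\eta<\xi}I_\eta=[0,d_\xi)$ for every $\xi$; hence the projection of $Y$ onto its first $\xi$ new coordinates is, via the regrouping homeomorphism, precisely the projection $\sigma^{\omega_1}_{d_\xi}$ of $X$ onto its first $d_\xi$ old coordinates. Therefore a point of $Y_\xi$ has a non-trivial fiber iff the corresponding point of $X_{d_\xi}$ does, which yields a bijection $\LL_\xi(Y)\cong\LL_{d_\xi}(X)$. Since $d_\xi\in C\subseteq\WW(X)$ for every $\xi$, each $\LL_\xi(Y)$ is countable, i.e.\ $\WW(Y)=\omega_1$; and since $d_\xi>\gamma$ for $\xi\ge 1$, each such $\LL_\xi(Y)$ is infinite, so $|\LL_\xi(Y)|=\aleph_0$ for all $\xi>0$ (level $0$ being the one-point root $X_0$), as required.

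The only genuinely delicate point is this correspondence $\LL_\xi(Y)\cong\LL_{d_\xi}(X)$: it is exactly where the insistence that $D$ be a \emph{club} is used, since continuity of its enumeration is what makes the blocks below a limit level $\xi$ union up to the correct initial segment $[0,d_\xi)$ with $d_\xi\in C$, and it is what guarantees that reblocking creates no new uncountable levels. The remaining requirements --- infinitude of the blocks (secured by taking $D$ among limit ordinals) and infinitude of the levels (secured by pushing $D$ past $\gamma$, using the finite-levels fact) --- are then routine, and I would expect a write-up to spend its care almost entirely on verifying this projection identity.
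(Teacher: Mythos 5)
Your proposal is correct and is essentially the paper's own argument: the paper states this lemma without a separate proof, as an immediate consequence of the two facts that $Q \cong Q^\omega$ and that an Aronszajn tree can have only countably many finite levels, which is precisely your reblocking of the $[0,1]$-coordinates into countably infinite blocks whose boundaries run along a club inside $C \cap \Lim$ chosen past a bound $\gamma$ on the (countably many) finite levels. Your verification of the identity $\bigcup_{\eta<\xi} I_\eta = [0,d_\xi)$ via continuity of the club enumeration, giving $\LL_\xi(Y) \cong \LL_{d_\xi}(X)$, is exactly the routine detail the paper leaves to the reader.
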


Of course, $\LL_0 = X_0 = \{\emptyset\} = Q^0$,
and $\emptyset$ is the root node of the tree.

\begin{definition}
If $X \subseteq Q^{\omega_1}$ is an Aronszajn compactum
and $\WW(X) = \omega_1$, let $\widehat \LL _\alpha =
\{x \in \LL_\alpha : w((\sigma^{\omega_1}_\alpha)\iv\{x\}) = \aleph_1\}$,
and let $\widehat T = \bigcup_\alpha \widehat \LL _\alpha$.
\end{definition}

Since $X$ is not second countable, each
$\widehat \LL_\alpha  \ne \emptyset$ and $\widehat T$ is
an Aronszajn subtree of $T$.
Repeating the above argument, we get

\begin{lemma}
\label{lemma-better-nf}
Every Aronszajn compactum is homeomorphic to some $X \subseteq Q^{\omega_1}$
such that $\WW(X) = \omega_1$, and
$|\widehat \LL_\alpha| = \aleph_0$ for all $\alpha > 0$,
and each $x \in \LL_ \alpha \backslash \widehat \LL_\alpha$ is a
leaf, and each $x \in \widehat \LL_\alpha$ 
has $\aleph_0$ immediate successors in $\widehat \LL_{\alpha +1}$.
\end{lemma}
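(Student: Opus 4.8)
The plan is to keep the space $X$ but re-embed it: I would choose a new tower of metric quotient maps $\sigma^{\omega_1}_\alpha : X \onto X_\alpha$, designed coordinate-by-coordinate to force the splitting behaviour we want, and then tidy up the level counts by repeating the blocking argument behind Lemma~\ref{lemma-nf}. Two facts drive the construction. First, $\widehat T$ is Aronszajn (as noted after its definition), so it has only countably many finite levels. Second, by Lemma~\ref{lemma-subset} each fiber $(\sigma^{\omega_1}_\alpha)\iv\{x\}$ over a big branch point $x \in \widehat\LL_\alpha$, being a closed non-second-countable subspace of $X$, is itself an Aronszajn compactum, while the fiber over a small branch point $x \in \LL_\alpha \setminus \widehat\LL_\alpha$ is second countable.

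First I would arrange the leaf and successor conditions by a recursion on $\alpha < \omega_1$ that fixes the $\alpha$-th coordinate, i.e.\ the one-step refinement $\sigma^{\alpha+1}_\alpha$. At stage $\alpha$ only countably many branch points lie in $\LL_\alpha$, and I partition the $\omega$ sub-coordinates of the single Hilbert-cube coordinate $\alpha$ among them, which is legitimate since $Q \cong Q^\omega$. To each small $x \in \LL_\alpha \setminus \widehat\LL_\alpha$ I assign an injective embedding of its second-countable fiber into its sub-block; then coordinate $\alpha$ already separates that fiber, so no point over $x$ is a branch point and $x$ becomes a leaf, giving (b). To each big $x \in \widehat\LL_\alpha$, whose fiber $F_x$ is an Aronszajn compactum, I assign the metric quotient of $F_x$ at a countable level $\beta$ chosen so that the corresponding set $\widehat\LL_\beta(F_x)$ of big branch points of $F_x$ is infinite; such $\beta$ exists because $\widehat T(F_x)$, being Aronszajn, has all but countably many of its levels infinite, and this quotient is metrizable (as $\beta < \omega_1$), hence embeds into the sub-block. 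This makes the immediate big successors of $x$ correspond exactly to $\widehat\LL_\beta(F_x)$, so $x$ has exactly $\aleph_0$ of them, giving (c). One then checks that the assembled coordinate $\alpha$ is continuous on all of $X$.

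Next I would secure (a) by repeating the reindexing argument of Lemma~\ref{lemma-nf}, now applied to $\widehat T$. Because $\widehat T$ is Aronszajn, the set $F$ of $\alpha$ with $\widehat\LL_\alpha$ finite is countable, hence bounded by some $\delta < \omega_1$; I then reindex along a club $C' \subseteq (\delta, \omega_1)$, packing each block $[\gamma_\alpha, \gamma_{\alpha+1})$ into one coordinate via $Q \cong Q^\omega$, where $\langle \gamma_\alpha : \alpha < \omega_1 \rangle$ enumerates $C'$. Since this leaves the fibers unchanged, the new $\widehat\LL_\alpha$ equals the old $\widehat\LL_{\gamma_\alpha}$, which is countably infinite for every $\alpha > 0$, while the old $\LL_{\gamma_\alpha}$ stays countable, so $\WW(X) = \omega_1$ is preserved. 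Merging levels destroys no leaf, and it leaves each surviving big node with exactly the $\aleph_0$ big successors it accumulates by the next block level, so (b) and (c) persist.

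The hard part will be the bookkeeping needed to make the recursion \emph{faithful}, that is, to guarantee that $X$ itself, and not a proper quotient, is the inverse limit of the new tower: this requires that the chosen coordinates jointly separate the points of $X$, which should hold because every small fiber is fully resolved at its own level and any two points of a common big fiber are eventually split as the big fibers keep splitting, but pinning this down forces the recursion to be driven by a simultaneous enumeration of the pairs to be separated, and requires controlling the inverse limits taken at limit stages so that each $X_\delta$ still has only countably many branch points. Equally delicate is guaranteeing \emph{exactly} $\aleph_0$ immediate big successors at every node and every level while keeping every $\LL_\alpha$ countable, and verifying that this splitting recursion and the level-merging blocking can be carried out together so that (a), (b) and (c) all hold simultaneously.
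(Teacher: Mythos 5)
Your overall strategy --- exploit that $\widehat T$ is Aronszajn, that fibers over big nodes are Aronszajn compacta (Lemma~\ref{lemma-subset}), and that $Q \cong Q^\omega$ permits blocking --- is the right one, but the per-fiber coordinate surgery in your second paragraph contains a genuine flaw, not just deferred bookkeeping. Each map you assign to a fiber (the embedding of a small fiber, or the chosen level-$\beta$ quotient of a big fiber $F_x$) must be extended to a continuous map on all of $X$ before it can serve as a coordinate, and the construction stands or falls on what these extensions do to the \emph{other} fibers. An arbitrary (say, Tietze) extension, restricted to another big fiber $F_{x'}$, is merely some metric quotient of the Aronszajn compactum $F_{x'}$, and such a quotient need \emph{not} have only countably many non-trivial point-inverses: countable branching is guaranteed only for quotients taken along club levels of a witnessing embedding, equivalently by countable elementary submodels (Lemma~\ref{lemma-el}); indeed the paper notes that $\WW(X)$ depends on the embedding and, for $[0,1]$-coordinates, cannot be all of $\omega_1$ once $\dim(X) \ge 2$. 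Hence your claim that the immediate big successors of $x$ ``correspond exactly to $\widehat\LL_\beta(F_x)$'' is unjustified --- the coordinates assigned to the other nodes refine the decomposition of $F_x$ further --- and in the worst case the new level $\alpha+1$ acquires \emph{uncountably} many branch points, destroying $\WW(X)=\omega_1$ already at successor stages. This problem is not among the three difficulties you flag at the end (faithfulness, limit stages, compatibility of the two steps), and none of those three is resolved in the proposal either.

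The paper's (tersely indicated) proof sidesteps all of these issues by never introducing new coordinate functions: first normalize as in Lemma~\ref{lemma-nf} so that $\WW(X)=\omega_1$, and then observe that everything you need already occurs cofinally in the \emph{existing} tower. A second countable fiber $F$ is compact metrizable, so it has a countable base consisting of traces of finitely supported basic open sets, whence $\sigma^{\omega_1}_\beta \res F$ is injective for all sufficiently large $\beta < \omega_1$; and for a big node $x$, the set of big nodes above $x$ at level $\beta$ is a level of $\widehat T(F_x)$, which is Aronszajn, so it is infinite for all but countably many $\beta$. Since each level has only countably many nodes and countable subsets of $\omega_1$ are bounded, one can recursively choose a club $\{\gamma_\alpha : \alpha < \omega_1\}$ so that between $\gamma_\alpha$ and $\gamma_{\alpha+1}$ every small node's fiber gets injected and every big node gains infinitely many big successors, and then block the coordinates $[\gamma_\alpha,\gamma_{\alpha+1})$ into single $Q$-coordinates via $Q \cong Q^\omega$. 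Because the new tower consists of the old coordinates regrouped, faithfulness is automatic; each new level \emph{is} an old level $\LL_{\gamma_\alpha}$ (hence countable, including at limit stages, by continuity of the club); and the leaf and successor counts are exactly what the club choice arranged. (The paper's elementary-submodel variant, a chain with $M_\alpha \in M_{\alpha+1}$, achieves the leaf property for the same underlying reason: elementarity, unlike an ad hoc extension, guarantees that the functions added at stage $\alpha+1$ keep the branching countable.)
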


This normalization can also be obtained with elementary submodels.
Start with a continuous chain of elementary submodels,
$M_\alpha \prec H(\theta)$, for $\alpha < \omega_1$,
with $X \in M_0$ and each $M_\alpha \in M_{\alpha+1}$.
Let $X_\alpha = X / M_\alpha$, let $\pi_\alpha: X \onto X_\alpha$
be the natural map, and let
$\LL_\alpha = \{y \in X_\alpha: |\pi_\alpha\iv\{y\}| > 1\}$.
We may view each $X_\alpha$ as embedded topologically
into $Q^\alpha$, in which case $\LL_\alpha$ has the same meaning
as before.
If $\pi_\alpha\iv\{y\}$ is second countable, then
(since $M_\alpha \in M_{\alpha+1}$), all the points in 
$\pi_\alpha\iv\{y\}$ are separated by functions in
$C(X) \cap M_{\alpha+1}$, so 
$y \in \LL_ \alpha \backslash \widehat \LL_\alpha$ is a leaf.

If $X$ is a compacted Aronszajn line, then
$X_{\alpha + 1}$ is formed by replacing
each $y \in \LL_\alpha$ by a compact interval $I_y$ of size at 
least $2$.  If $y \in \LL_ \alpha \backslash \widehat \LL_\alpha$,
then $\pi_\alpha\iv\{y\}$ is second countable
and is isomorphic to $I_y$.
Note that the tree may have uncountably many leaves;
we do not obtain the conventional normalization of an Aronszajn tree,
where the tree is uncountable above every node.

Next, we consider the ideal of second countable subsets of $X$:

\begin{definition}
For any space $X$, $\II_X$ denotes the family of all $S \subseteq X$
such that $S$, with the subspace topology, is second countable.
\end{definition}

$\II_X$ need not be an ideal.  It is obviously closed under subsets,
but need not be closed under unions (consider $\omega \cup \{p\}
\subset \beta \omega $).

\begin{lemma}
\label{lemma-ideal}
Assume that $X \subseteq Q^{\omega_1}$ is an HL Aronszajn compactum,
as in Lemma \ref{lemma-nf}.  Then
$\II_X$  is a $\sigma$--ideal, and, for all $S \subseteq X$,
the following are equivalent:
\begin{itemizz}
\item[1.] $S \in \II_X$.
\item[2.] For some $\alpha < \omega_1$,
$\sigma^{\omega_1}_\alpha (S) \cap \LL_\alpha = \emptyset$.
\item[3.] There is a $G \supseteq S$ such that $G\in \II_X$
and $G$ is a $G_\delta$ subset of $X$.
\item[4.] There is an $f \in C(S,Q)$ such that $f$ is 1-1.
\item[5.] There is an $f \in C(S,Q)$ such that 
$f\iv\{y\}$ is second countable for all $y \in Q$.
\end{itemizz}
\end{lemma}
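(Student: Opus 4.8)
The plan is to fix the notation $\Lambda_\alpha := (\sigma^{\omega_1}_\alpha)\iv(\LL_\alpha) = \{x\in X : \pi^{\omega_1}_\alpha(x)\in\LL_\alpha\}$ for the set of points lying over a splitting node at level $\alpha$, and to record two structural facts. First, $\{\Lambda_\alpha\}$ is decreasing: if $\alpha<\beta$ and $x\in\Lambda_\beta$ then $x$ shares its first $\beta$, hence its first $\alpha$, coordinates with another point of $X$, so $x\in\Lambda_\alpha$. Second, $\bigcap_\alpha\Lambda_\alpha=\emptyset$: for fixed $x$ the nodes $\pi^{\omega_1}_\alpha(x)$, taken over those $\alpha$ with $x\in\Lambda_\alpha$, form a chain in the Aronszajn tree $T$ and so are countable; thus $\{\alpha:x\in\Lambda_\alpha\}$ is a bounded initial segment of $\omega_1$, with supremum I call $h(x)$. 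Hence $x\in\Lambda_\alpha$ iff $\alpha<h(x)$, and condition $(2)$ for a set $S$ is exactly the assertion that $h$ is bounded on $S$. Note also that each $\Lambda_\alpha$ is $F_\sigma$, being a countable union of fibers $(\sigma^{\omega_1}_\alpha)\iv\{x\}$ with $x\in\LL_\alpha$.

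Several implications are then routine and use only the Aronszajn structure. For $(1)\Rightarrow(2)$: a second countable $S$ has its topology separated by countably many coordinates, so $\pi^{\omega_1}_\gamma\res S$ is injective for some countable $\gamma$; if $h$ were unbounded on $S$ I could choose $s_\xi\in S$ ($\xi<\omega_1$) with $h(s_\xi)$ strictly increasing, and for all large $\xi$ the images $\pi^{\omega_1}_\gamma(s_\xi)$ would be uncountably many distinct elements of the countable set $\LL_\gamma$, a contradiction. For $(2)\Rightarrow(3)$ and $(2)\Rightarrow(4)$: take $G:=X\setminus\Lambda_\alpha=(\sigma^{\omega_1}_\alpha)\iv(X_\alpha\setminus\LL_\alpha)$, a $G_\delta$ containing $S$; since $\sigma^{\omega_1}_\alpha$ is a closed map, its restriction to $G$ is a homeomorphism onto the second countable space $X_\alpha\setminus\LL_\alpha$, which yields the $G_\delta$ witness for $(3)$ and, composing with an embedding $X_\alpha\hookrightarrow Q$, the injective map for $(4)$. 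Then $(3)\Rightarrow(1)$ and $(4)\Rightarrow(5)$ are immediate, and the $\sigma$-ideal claim follows from $(1)\Leftrightarrow(2)$: given $S_n\in\II_X$, pick $\alpha_n$ with $S_n\cap\Lambda_{\alpha_n}=\emptyset$, and then $\alpha:=\sup_n\alpha_n$ works for $\bigcup_n S_n$ because the $\Lambda_\alpha$ decrease.

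This leaves $(5)\Rightarrow(1)$, which I expect to be the only place the hereditary Lindel\"of hypothesis is needed and the heart of the lemma; I would argue it as $(5)\Rightarrow(2)$, i.e. that the $f$ of $(5)$ forces $h$ to be bounded on $S$. The first move is localization: call $s\in S$ \emph{tame} if some relatively open $O\ni s$ has $h\res O$ bounded. The tame points form an open $S_g\subseteq S$, and since $S$ is hereditarily Lindel\"of a countable subcover of $S_g$ by such $O$'s bounds $h$ on $S_g$. So everything reduces to the closed set $S_b=S\setminus S_g$. Here condition $(5)$ enters: each fiber $F_y=f\iv\{y\}$ is second countable, so by $(1)\Rightarrow(2)$ already proved $h$ is bounded on $F_y$, say by $\alpha_y$. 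A point of $Q$ every neighborhood of which pulls back to a set of unbounded $h$ therefore cannot be captured by countably many tame neighborhoods, which confines the obstruction to a ``blow-up locus'' $Y^{*}\subseteq Q$; and comparing the fiber bounds $\alpha_y$ one sees that if $Y^{*}$ were countable, or if $\sup_{y\in Y^{*}}\alpha_y<\omega_1$, Lindel\"ofness of $S$ would again bound $h$ on all of $S$.

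The \textbf{main obstacle} is precisely the elimination of $S_b$ (equivalently, ruling out that $Y^{*}$ makes $h$ unbounded). The difficulty is that $\LL_\alpha$ need not be closed in $X_\alpha$, so neither $\Lambda_\alpha$ nor $S\cap\Lambda_\alpha$ is closed and the naive ``decreasing closed sets in a Lindel\"of space'' argument fails: the high-$h$ points accumulating onto a fiber $F_y$ project into the countable set $\LL_{\alpha_y}$ yet may limit onto $\cl(\LL_{\alpha_y})\setminus\LL_{\alpha_y}$, and their limits in $X$ may even fall outside $S$. My plan is to turn this accumulation against the hypothesis: from a nonempty $S_b$, use the countability of each level $\LL_\alpha$ together with the fiberwise bounds $\alpha_y$ to select, by transfinite recursion, uncountably many points of $X$ separated \emph{to the right} --- each chosen over a non-splitting node at a level below which every later-chosen point splits --- producing an uncountable right-separated subspace and contradicting hereditary Lindel\"ofness. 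Making the recursion survive the limit stages, and arranging the right-separating neighborhoods despite the non-closedness of $\LL_\alpha$, is where the real work lies; failing a direct construction, I would run the tame/blow-up dichotomy as an iterated, Cantor--Bendixson-style derivative that terminates by hereditary Lindel\"ofness, phrased through the elementary-submodel characterization of Section \ref{sec-elem}.
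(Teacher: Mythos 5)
Your treatment of everything except $(5)\Rightarrow(1)$ is correct and is essentially the paper's own argument: the decreasing sets $\Lambda_\alpha := (\sigma^{\omega_1}_\alpha)\iv(\LL_\alpha)$, the bound $h(x)<\omega_1$ coming from the absence of uncountable branches, the proof of $(1)\Rightarrow(2)$ by combining injectivity of some $\sigma^{\omega_1}_\gamma\res S$ with countability of $\LL_\gamma$, the witness $G=(\sigma^{\omega_1}_\alpha)\iv(X_\alpha\setminus\LL_\alpha)$ for $(2)\Rightarrow(3)$ and $(2)\Rightarrow(4)$, and the $\sigma$-ideal claim from $(1)\Leftrightarrow(2)$ all match the paper. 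But the implication you leave unfinished is a genuine gap, not a technicality: you say yourself that this is ``the heart of the lemma'' and that the recursion or Cantor--Bendixson iteration you sketch is ``where the real work lies,'' and that work is never carried out. As it stands, the equivalence cycle does not close.

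The missing idea is that you should not attack $(5)\Rightarrow(2)$ by bounding $h$ on $S$; that route forces you to control uncountably many fibers $f\iv\{y\}$ with potentially unbounded fiberwise bounds $\alpha_y$, which is exactly your ``blow-up locus'' obstruction. The paper instead proves $(5)\Rightarrow(1)$ and lets the already-established facts finish. Fix a countable base $\{U_n\}$ for $Q$ and write $f\iv(U_n)=S\cap V_n$ with $V_n$ open in $X$. Since $X$ is HL, each $V_n$ is $F_\sigma$, hence a cozero set of $X$, hence $\pi_\alpha$-saturated for some countable $\alpha$ (alternatively: $S$ is Lindel\"of, so each $f\iv(U_n)$ is a countable union of traces on $S$ of finitely-supported basic open sets); choose a single $\alpha<\omega_1$ that works for all $n$ simultaneously. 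Then $f$ is constant on $S\cap(\sigma^{\omega_1}_\alpha)\iv\{z\}$ for \emph{every} $z\in X_\alpha$, so each such trace lies inside one fiber of $f$ and is therefore second countable by $(5)$. Now decompose $S$ into countably many pieces: the traces $S\cap(\sigma^{\omega_1}_\alpha)\iv\{y\}$ for the countably many $y\in\LL_\alpha$, together with $S\cap(\sigma^{\omega_1}_\alpha)\iv(X_\alpha\setminus\LL_\alpha)$, which is second countable because $\sigma^{\omega_1}_\alpha$ is injective (indeed a homeomorphism onto its image) there. Since you have already shown that $\II_X$ is a $\sigma$-ideal, it follows that $S\in\II_X$. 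This single uniformization step --- one $\alpha$ making $f$ factor through $\sigma^{\omega_1}_\alpha$ on $S$ --- is where HL is really needed; it replaces your fiberwise bounds $\alpha_y$, sidesteps the non-closedness of $\LL_\alpha$ entirely, and makes the tame/blow-up dichotomy, the right-separated recursion, and the elementary-submodel fallback all unnecessary.
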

\begin{proof}
It is easy to verify $(2) \to (3) \to (1) \to (4) \to (5)$.
In particular, for $(2) \to (3)$:  Fix $\alpha$ and let
$G = (\sigma^{\omega_1}_\alpha)\iv (X_\alpha \backslash \LL_\alpha)$.
Then $G$ is a $G_\delta$ set and 
$\sigma^{\omega_1}_\alpha : G \onto X_\alpha \backslash \LL_\alpha$
is a 1-1 closed map, and hence a homeomorphism.

For $(1) \to (2)$:
Fix an open base for $S$ of the form 
$\{V_n \cap S : n \in \omega\}$, where each $V_n$ is open in $X$.
$X$ is HL, so $V_n$ is an $F_\sigma$.
We can thus fix $\xi < \omega_1$ such that each
$V_n = (\sigma^{\omega_1}_\xi)\iv ( \sigma^{\omega_1}_\xi (V_n))$.
It follows that $\sigma^{\omega_1}_\xi$ is 1-1 on $S$.
We may then choose $\alpha$ with $\xi < \alpha < \omega_1$
such that $\sigma^{\omega_1}_\alpha(S) \cap \LL_\alpha = \emptyset$.

Now, $\II_X$  is a $\sigma$--ideal by  $(1) \leftrightarrow (2)$.

To prove $(5) \to (2)$:  Fix $f$ as in (5).
Let $\{U_n : n \in \omega\}$ be an open base for $Q$;
then $f\iv(U_n) = S \cap V_n$, where $V_n$ is open in $X$
and hence an $F_\sigma$ set.
We can thus fix $\alpha < \omega_1$ such that
$V_n = (\sigma^{\omega_1}_\alpha)\iv ( \sigma^{\omega_1}_\alpha (V_n))$.
It follows that $f$ is constant on 
$S \cap (\sigma^{\omega_1}_\alpha)\iv \{z\}$ for all $z \in X_\alpha$.
Thus, 
$S \cap (\sigma^{\omega_1}_\alpha)\iv \{z\}$ is second
countable for all $z \in X_\alpha$.
But then $S$ is contained in the union
of $\bigcup\{S\cap ( \sigma^{\omega_1}_\alpha )\iv\{y\} : y \in \LL_\alpha\}$
and $ ( \sigma^{\omega_1}_\alpha )\iv (X_\alpha \backslash \LL_\alpha) \cong
(X_\alpha \backslash \LL_\alpha) $, so $S \in \II_X$ because
$\II_X$  is a $\sigma$--ideal.
\end{proof}

This proof shows that every Aronszajn compactum
is an ascending union of $\omega_1$ Polish spaces:
namely, the
$(\sigma^{\omega_1}_\alpha)\iv  (X_\alpha \backslash \LL_\alpha)$.

We needed $X$ to be Aronszajn in Lemma \ref{lemma-ideal};
HS and HL are not enough to prove the equivalence of (1)(3)(4)(5).
If $S$ is the
Sorgenfrey line contained in the double arrow 
space $X$, then (4)(5) are true but (1)(3) are false.
Similar remarks hold for similar spaces which are both HS and HL.
For example, assuming CH, Filippov \cite{Fil} constructed a
locally connected continuum which is HS and HL but not second
countable.  The space was obtained by replacing a Luzin set of
points in $[0,1]^2$ by circles.  If $S$ contains one point
from each of the circles, then S satisfies (4)(5) but
fails (1)(3).
In both examples, the space $X$ itself satisfies (5) but not (1)(3)(4).

More generally, any space $X$ that has an $f\in C(X,Q)$ as in (5) 
cannot be an Aronszajn compactum.  Thus, a ZFC example of an
HL Aronszajn compactum would
settle in the negative the following well-known question of Fremlin
(\cite{Fr} 44Qc): is it consistent that for every HL compactum $X$,
there is an $f \in C(X,Q)$ such that 
$|f\iv\{y\}| <  \aleph_0$  for all $y \in Q$?
In \cite{Gr}, Gruenhage gives some of the history 
related to this question, and points out
some related results suggesting that the answer might be
``yes'' under PFA.

\section{Closure Properties of Aronszajn Compacta}
\label{sec-clos}
Closure under subspaces was already mentioned in
Lemma \ref{lemma-subset}.
For products, Lemma
\ref{lemma-el} implies:

\begin{lemma}
\label{lemma-Ar-prod}
Assume that $X$ is an Aronszajn compactum and $Y$ is an arbitrary space.
Then $X \times Y$ is an Aronszajn compactum iff $Y$ is compact and countable.
\end{lemma}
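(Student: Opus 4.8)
The plan is to run both directions through the elementary-submodel characterization of Lemma \ref{lemma-el}(2). The first step, shared by both directions, is to compute the quotient of a product. Fix a countable \(M \prec H(\theta)\) with \(X,Y \in M\), and let \(\pi_M^X, \pi_M^Y, \pi_M\) be the canonical maps of \(X, Y, X\times Y\) onto \(X/M, Y/M, (X\times Y)/M\). I claim that \((X\times Y)/M\) is canonically \(X/M \times Y/M\), with \(\pi_M = \pi_M^X \times \pi_M^Y\). Indeed, writing \(p_X,p_Y\) for the projections, the functions \(g\circ p_X\) and \(h\circ p_Y\) (for \(g\in C(X,\RRR)\cap M\), \(h\in C(Y,\RRR)\cap M\)) lie in \(C(X\times Y,\RRR)\cap M\) and already separate any two points differing in their \(X/M\)- or \(Y/M\)-class; conversely, given \(f\in C(X\times Y,\RRR)\cap M\), elementarity applied to the Stone--Weierstrass theorem yields, for each \(\varepsilon>0\), a finite sum \(\sum_i g_i\otimes h_i\) with \(g_i,h_i\in M\) that \(\varepsilon\)-approximates \(f\), so \(f\) cannot separate two points lying in the same \(X/M\)- and \(Y/M\)-class. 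Consequently the non-singleton fibres of \(\pi_M\) form exactly \(B := (A_X \times Y/M)\cup(X/M\times A_Y)\), where \(A_X, A_Y\) denote the sets of points of \(X/M, Y/M\) with non-singleton fibre.

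For \(\Leftarrow\), assume \(Y\) is compact and countable; then \(Y\) is second countable, so \(w(X\times Y)=\aleph_1\) and \(\cchi(X\times Y)=\aleph_0\). Since \(Y\in M\) is countable and \(\omega\subseteq M\), a surjection \(\omega\onto Y\) in \(M\) shows \(Y\subseteq M\); as \(Y\) is metrizable, \(C(Y,\RRR)\cap M\) separates its points, so \(Y/M = Y\) and \(A_Y=\emptyset\). Because \(X\) is an Aronszajn compactum, \(A_X\) is countable by Lemma \ref{lemma-el}. Hence \(B = A_X\times Y\) is countable, and \(X\times Y\) is an Aronszajn compactum by Lemma \ref{lemma-el} again. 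For \(\Rightarrow\), suppose \(X\times Y\) is an Aronszajn compactum. It is compact, so its projection exhibits \(Y\) as a continuous image of a compactum, hence \(Y\) is compact; fixing \(x_0\in X\), the subspace \(\{x_0\}\times Y\cong Y\) is first countable, so \(\cchi(Y)=\aleph_0\), and \(w(Y)\le w(X\times Y)=\aleph_1\). It remains to show \(Y\) is countable. Since \(X\) is not second countable, \(\pi_M^X\) is never injective, so \(A_X\neq\emptyset\); fixing \(\bar x_0\in A_X\), countability of \(B\) forces \(\{\bar x_0\}\times Y/M\) to be countable, i.e.\ \(Y/M\) is countable, for \emph{every} countable \(M\ni X,Y\). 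Thus it suffices, assuming \(Y\) uncountable, to produce a single \(M\) with \(Y/M\) uncountable.

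The hard part is exactly this last step: showing that an uncountable, first countable compactum \(Y\) admits an uncountable metric quotient. I would reduce it to the classical fact that a compact Hausdorff space is scattered if and only if it admits no continuous surjection onto \([0,1]\) (equivalently, every \(f\in C(Y,\RRR)\) has countable range), and then prove that \(Y\) is non-scattered. For this I would argue that a first countable compactum of cardinality \(\ge\aleph_1\) has at least two complete accumulation points: if \(p\) were the only one, then for a decreasing closed neighbourhood base \(V_n\downarrow\{p\}\), each compact set \(Y\setminus\intr(V_n)\) would have no complete accumulation point (its only candidate, \(p\), is excluded), hence be countable, making \(Y\setminus\{p\}=\bigcup_n(Y\setminus\intr(V_n))\) countable, a contradiction. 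Given two complete accumulation points, disjoint closed neighbourhoods give two disjoint compact subsets of size \(\ge\aleph_1\); iterating builds a Cantor scheme of such compacta whose intersection is a non-empty perfect set, so \(Y\) is non-scattered. Then any continuous surjection \(g\colon Y\onto[0,1]\) lies in some countable \(M\ni Y\), for which \(Y/M\) is uncountable --- contradicting the previous paragraph and forcing \(Y\) countable. (Alternatively one could invoke Lemma \ref{lemma-subset} to note that \(Y\), as a closed subspace of an Aronszajn compactum, is second countable or itself Aronszajn and split into cases, but the scatteredness argument disposes of both uniformly.) I expect the complete-accumulation/Cantor-scheme step to be the main obstacle, since it is the only place where the topological hypotheses on \(Y\) must be converted into an actual uncountable metric image rather than merely a cardinality count.
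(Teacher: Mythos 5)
Your route is the one the paper itself intends: the paper offers no separate argument for this lemma (it says only that it follows from Lemma \ref{lemma-el}), and your proof is precisely the fleshing-out of that remark --- the identification of $(X\times Y)/M$ with $X/M \times Y/M$ via Stone--Weierstrass plus elementarity, the observation that a countable compact $Y \in M$ satisfies $Y \subseteq M$, hence $Y/M = Y$ and $A_Y = \emptyset$, and, for the forward direction, the reduction to the classical equivalence ``scattered iff no continuous surjection onto $[0,1]$'' together with the fact that an uncountable first countable compactum is non-scattered. That last fact is exactly the nontrivial ingredient the paper leaves to the reader, and you correctly identified it as the crux.

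Two steps in your proof of that final fact are, however, false as literally stated, though both are easily repaired. First, the parenthetical ``its only candidate, $p$, is excluded'': complete accumulation is computed relative to the cardinality of the set at hand, so a complete accumulation point of $K_n := Y \setminus \intr(V_n)$ need not be a complete accumulation point of $Y$ when $|K_n| < |Y|$; the hypothesis that $p$ is the unique complete accumulation point of $Y$ therefore does not make $K_n$ accumulation-point-free, and the conclusion ``hence countable'' does not follow. (Your argument does survive when $\mathrm{cf}(|Y|) > \omega$, e.g.\ $|Y| = \aleph_1$, but not a priori in general.) The repair is to work with \emph{condensation} points (every neighbourhood meets the set uncountably): every uncountable subset of a compactum has one (cover by open sets meeting it countably and take a finite subcover), a condensation point of $K_n$ is automatically a condensation point of $Y$ and lies in $K_n$, so if $p$ were the unique condensation point of $Y$ then each $K_n$ would be countable and $Y = \{p\} \cup \bigcup_n K_n$ countable --- contradiction. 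Second, the intersection $\bigcap_n \bigcup_{s \in 2^n} K_s$ of your Cantor scheme need not be a perfect set: there is no metric, hence no diameter control, and a branch intersection can be an isolated point of the limit set. What the scheme does give is a compact subset mapping continuously onto $2^\omega$ (send each point to its branch; preimages of basic clopen sets are relatively clopen), hence onto $[0,1]$; by the Tietze extension theorem that surjection extends over all of $Y$, so $Y$ maps onto $[0,1]$ and is non-scattered by the classical fact you cite. With these two repairs, your proof is complete and correct.
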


Regarding quotients, we first prove:

\begin{lemma}
\label{lemma-eq-pair}
Assume that $X,Y$ are compact, $\varphi: X \onto Y$, and
$X,Y,\varphi \in M \prec H(\theta)$.   Let $\sim$ denote the $M$ equivalence relation
on $X$ and on $Y$.  Then 
\begin{itemizz}
\item[1.] If $x_0, x_1 \in X $ and $x_0 \sim x_1$, then 
$\varphi(x_0) \sim \varphi(x_1)$;
so, the inverse image of an equivalence class of $Y$
is a union of equivalence classes of $X$.
\item[2.] If $y_0, y_1 \in Y $ and $x_0 \not\sim x_1$ 
for all $x_0 \in \varphi\iv\{y_0\}$ and
all $x_1 \in \varphi\iv\{y_1\}$, then
$y_0 \not\sim y_1$.
\end{itemizz}
\end{lemma}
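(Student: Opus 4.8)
The plan is to handle the two parts separately, with part (1) serving as the engine that descends $\varphi$ to the quotients. For part (1), I would simply use that $C(Y,\RRR)\cap M$ pulls back into $C(X,\RRR)\cap M$: if $x_0\sim x_1$ and $g\in C(Y,\RRR)\cap M$, then $g\circ\varphi\in C(X,\RRR)\cap M$ (composition is definable from $g,\varphi\in M$), so $g(\varphi(x_0))=(g\circ\varphi)(x_0)=(g\circ\varphi)(x_1)=g(\varphi(x_1))$; since $g$ is arbitrary this gives $\varphi(x_0)\sim\varphi(x_1)$. The ``so'' clause is then immediate: $\varphi$ maps each $\sim$--class of $X$ into a single $\sim$--class of $Y$, hence $\varphi\iv$ of a $\sim$--class of $Y$ is saturated, i.e.\ a union of $\sim$--classes of $X$.

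For part (2) I would argue contrapositively, assuming $y_0\sim y_1$ and producing a function in $C(Y,\RRR)\cap M$ separating them, which contradicts $y_0\sim y_1$. First note we may assume $y_0\ne y_1$ (otherwise the hypothesis fails, since $x\sim x$), so $F_0:=\varphi\iv\{y_0\}$ and $F_1:=\varphi\iv\{y_1\}$ are disjoint compacta. The hypothesis says exactly that no point of $F_0$ is $\sim$ to a point of $F_1$, so a routine compactness/covering argument produces finitely many $f_1,\dots,f_k\in C(X,\RRR)\cap M$ whose joint map $\Psi=(f_1,\dots,f_k)\colon X\to\RRR^k$ (which lies in $M$) satisfies $\Psi(F_0)\cap\Psi(F_1)=\emptyset$; composing $\Psi$ with a suitable rational-coefficient map $\RRR^k\to[0,1]$ (hence a member of $M$) yields $u\in C(X,\RRR)\cap M$ with $u\res F_0\equiv 0$ and $u\res F_1\equiv 1$. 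The crucial observation is then that $C_0:=\varphi(u\iv\{0\})$ is \emph{not merely closed but a zero set}: writing $u\iv\{0\}=\bigcap_n\{u\le 1/n\}$ as a decreasing intersection of compacta and using $\varphi(\bigcap_n K_n)=\bigcap_n\varphi(K_n)$ for decreasing compacta, we get $C_0=\bigcap_n\varphi(\{u\le 1/n\})$, a closed $G_\delta$, hence the zero-set of some continuous function. Since $C_0\in M$, elementarity gives $g\in C(Y,\RRR)\cap M$ with $g\iv\{0\}=C_0$. Finally $y_0\in C_0$ (as $\emptyset\ne F_0\subseteq u\iv\{0\}$) so $g(y_0)=0$, while $y_1\notin C_0$ (as $u\res F_1\equiv1$, so $F_1\cap u\iv\{0\}=\emptyset$) so $g(y_1)\ne 0$; thus $g$ separates $y_0$ from $y_1$, contradicting $y_0\sim y_1$.

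The step I expect to be the main obstacle is precisely the production of the separating function \emph{inside $M$ on the $Y$ side}. The naive idea---take $u$ as above and push it forward by $y\mapsto\max\{u(x):x\in\varphi\iv\{y\}\}$ or $\min$---fails, because these pushforwards are only upper/lower semicontinuous (the map $\varphi$ need not be open), so they are not members of $C(Y,\RRR)$ and cannot witness $y_0\not\sim y_1$. One cannot simply invoke Urysohn's lemma to separate $F_0,F_1$ on $Y$ either, since an arbitrary continuous separator need not lie in $M$ (the points $y_0,y_1$ are not in $M$), and disjoint open sets coming from the semicontinuous pushforwards may have touching closures. The whole argument hinges on noticing that the \emph{image of a zero set under a continuous map between compacta is again a zero set}, which upgrades the merely-closed set $C_0$ to a zero set definable from $M$-data; elementarity then supplies its defining function in $M$ for free. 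I would therefore isolate this image-of-a-zero-set fact (via the decreasing-compacta identity) as the key lemma and present the rest as routine.
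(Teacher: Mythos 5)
Your part (1) is correct and is exactly the paper's argument (pull back a separating $g\in C(Y,\RRR)\cap M$ to $g\circ\varphi\in C(X,\RRR)\cap M$). In part (2), everything up through the construction of $u\in C(X,[0,1])\cap M$ with $u\equiv 0$ on $\varphi\iv\{y_0\}$ and $u\equiv 1$ on $\varphi\iv\{y_1\}$ is also fine, and is a repackaging of the paper's first step (its $\vec f$ together with rational open $U_0,U_1\in M$ with disjoint closures; your ``rational-coefficient map'' gloss needs exactly that covering step, which is routine). The genuine gap is the step you yourself single out as the hinge. The identity $C_0=\bigcap_n\varphi(\{u\le 1/n\})$ does \emph{not} make $C_0$ a $G_\delta$: each $\varphi(\{u\le 1/n\})$ is a continuous image of a compactum, hence closed --- not open --- and every closed set is trivially a countable intersection of closed sets, so nothing follows. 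Worse, the fact you isolate as the key lemma (``the image of a zero set under a continuous map between compacta is again a zero set'') is false precisely where Lemma \ref{lemma-eq-pair} has any content, namely for non-metrizable $Y$ (if $Y\in M$ is compact metrizable, then $M$ contains a countable point-separating family, so $\sim$ on $Y$ is equality and (2) is trivial). Concretely: let $X=(\omega_1+1)\times\{0,1\}$, let $Y$ be the quotient of $X$ identifying $(\omega_1,0)$ with $(\omega_1,1)$, let $\varphi$ be the quotient map, and $u(\beta,i)=i$; all are definable from $\omega_1$, hence lie in every $M\prec H(\theta)$. Then $u\iv\{0\}=(\omega_1+1)\times\{0\}$ is a clopen zero set, but $C_0=\varphi(u\iv\{0\})$ is not a $G_\delta$ in $Y$: every open set containing the identified top point contains the image of a tail of the \emph{other} copy, so a countable intersection of such open sets always overspills $C_0$. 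Hence no $g\in C(Y)$ whatsoever has $g\iv\{0\}=C_0$, and your appeal to elementarity has nothing to reflect; the proof stops here.

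The comparison with the paper is instructive, because the paper's device is dual to yours: your $C_0$ is the (merely closed) set of $y$ whose fiber \emph{meets} $u\iv\{0\}$, whereas the paper forms $V_i=\{y:\varphi\iv\{y\}\subseteq(\vec f\,)\iv(U_i)\}$, the set of $y$ whose fiber is \emph{contained} in an open set; these are open (their complements are images of compacta) and lie in $M$. That is the direction you would need. But you should also know that your instinct that this step is ``the main obstacle'' understates the situation: in the example above take $M$ countable, $\delta=M\cap\omega_1$, and $y_i=\varphi(\delta,i)$. The fibers are the singletons $\{(\delta,i)\}$, and $u\in M$ separates them, so the hypothesis of (2) holds; yet every $g\in C(Y,\RRR)\cap M$ pulls back on each copy to a member of $C(\omega_1+1,\RRR)\cap M$, which is constant on a tail $[\alpha,\omega_1]$ whose least such $\alpha$ is definable from $g$ and so lies in $M\cap\omega_1=\delta$, and the two tail values agree because the top points are identified; thus $g(y_0)=g(y_1)$ and the conclusion of (2) fails. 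So for bare compacta the statement itself is false (and the same example defeats the unjustified claim $\overline{V_0}\cap\overline{V_1}=\emptyset$ in the paper's own proof). Any complete argument must use hypotheses beyond compactness --- e.g.\ the situation actually present in the application, Theorem \ref{thm-ar-quotient}, where $\cchi(Y)=\aleph_0$ and all but countably many $M$-classes of $X$ are singletons. In short: part (1) matches the paper; part (2) founders on a false key lemma, and no alternative proof of that key lemma can exist.
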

\begin{proof}
For (1):  If $f \in C(Y) \cap M$ separates $\varphi(x_0)$
from $ \varphi(x_1)$ then
$f \circ \varphi \in C(X) \cap M$ separates $x_0$ from $x_1$.

For (2):  For each 
$x_0 \in \varphi\iv\{y_0\}$ and $x_1 \in \varphi\iv\{y_1\}$,
there is an $f \in C(X, [0,1]) \cap M$ such that $f(x_0) \ne f(x_1)$.
By compactness of $\varphi\iv\{y_0\} \times \varphi\iv\{y_1\}$,
there are $f_0, \ldots, f_{n-1} \in C(X, [0,1]) \cap M$
for some $n\in \omega$ such that: for all 
$x_0 \in \varphi\iv\{y_0\}$ and $x_1 \in \varphi\iv\{y_1\}$,
there is some $i < n$ such that
$f_i(x_0) \ne f_i(x_1)$.
These yield an $\vec f \in C(X, [0,1]^n) \cap M$ such that
$\vec f ( \varphi\iv\{y_0\} )  \cap  \vec f ( \varphi\iv\{y_1\} ) =
\emptyset  $.
Since $M$ contains a base for $[0,1]^n$,
there are open $U_0, U_1 \subseteq [0,1]^n$ with each $U_i \in M$
such that $\overline{U_0} \cap \overline{U_1} = \emptyset$
and each $\vec f ( \varphi\iv\{y_i\} )  \subseteq U_i$, so that
$\varphi\iv\{y_i\} \subseteq (\vec{f}\,)\iv (U_i)$.
Let $V_i = \{y \in Y : \varphi\iv\{y\} \subseteq (\vec f)\iv (U_i) \}$.
Then the $V_i$ are open in $Y$, each $V_i \in M$, each $y_i \in V_i$, and 
$\overline{V_0} \cap \overline{V_1} = \emptyset$. 
There is thus a $g \in C(Y) \cap M$
such that $g(\overline{V_0}) \cap g(\overline{V_1}) = \emptyset$, 
so that $g(y_0) \ne g(y_1)$.  Thus, $y_0 \not\sim y_1$.
\end{proof}

\begin{theorem}
\label{thm-ar-quotient}
Assume that $X$ is an Aronszajn compactum,
$\varphi : X \onto Y$, 
$w(Y) = \aleph_1$, and $\cchi(Y) = \aleph_0$.
Then $Y$ is an Aronszajn compactum.
\end{theorem}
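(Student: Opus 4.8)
The plan is to use the elementary-submodel characterization from Lemma \ref{lemma-el}, which reduces "Aronszajn compactum" to a statement about the quotients $X/M$. Since $Y$ is compact (as a continuous image of the compact $X$) and we are given $w(Y) = \aleph_1$ and $\cchi(Y) = \aleph_0$, it suffices to verify condition (2) of Lemma \ref{lemma-el} for $Y$: whenever $M$ is countable with $Y \in M \prec H(\theta)$, only countably many $y \in Y/M$ have $\pi\iv\{y\}$ non-singleton. First I would arrange that $X$, $Y$, and $\varphi$ all lie in $M$; this is harmless since if $Y \in M$ we may pass to a countable $M'$ containing all three (the set of such $M'$ is a club, and by Lemma \ref{lemma-el}(3) checking condition (2) on a club suffices). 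With $X, Y, \varphi \in M$, Lemma \ref{lemma-eq-pair} becomes available and is the engine of the argument.

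The key idea is to push the collapsing behavior of $Y$ under $M$ back up to $X$. Let $\sim$ denote the $M$-equivalence relation on both $X$ and $Y$, and write $\pi_X, \pi_Y$ for the respective quotient maps. Since $X$ is an Aronszajn compactum and $X \in M \prec H(\theta)$, Lemma \ref{lemma-el}(2) tells us that the set $A_X$ of non-singleton classes in $X/M$ is countable. I claim the set $A_Y$ of $y \in Y/M$ with $\pi_Y\iv\{y\}$ non-singleton is also countable. Suppose toward a contradiction that $A_Y$ is uncountable, and pick two distinct points $y_0, y_1 \in Y$ in the same $\sim$-class, i.e.\ $y_0 \sim y_1$ with $y_0 \ne y_1$. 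By the contrapositive of Lemma \ref{lemma-eq-pair}(2), since $y_0 \not\sim y_1$ fails, there must exist $x_0 \in \varphi\iv\{y_0\}$ and $x_1 \in \varphi\iv\{y_1\}$ with $x_0 \sim x_1$. Because $y_0 \ne y_1$, these $x_0, x_1$ are distinct, so each sits in a non-singleton $M$-class of $X$, contributing to $A_X$.

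The main obstacle — and the step requiring the most care — is turning this pointwise lifting into a genuine injection of uncountably many $Y$-classes into the countable set $A_X$, so the right counting is essential. For each collapsed pair in $Y$ one obtains a collapsed pair in $X$, but $A_X$ being countable bounds only the number of distinct $X$-classes, not a priori the number of $Y$-classes mapping into them. Here Lemma \ref{lemma-eq-pair}(1) supplies the needed control: the $\varphi$-preimage of a single $Y$-class is a union of $X$-classes, and distinct $Y$-classes have disjoint preimages, hence involve \emph{disjoint} families of $X$-classes. Thus each nontrivially-collapsed $Y$-class $[y]$ consumes at least one non-singleton $X$-class, and different such $Y$-classes consume disjoint non-singleton $X$-classes. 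An uncountable $A_Y$ would therefore force uncountably many distinct non-singleton $X$-classes, contradicting the countability of $A_X$. This establishes condition (2) for $Y$, and since $Y$ is compact with $w(Y) = \aleph_1$ and $\cchi(Y) = \aleph_0$, Lemma \ref{lemma-el} yields that $Y$ is an Aronszajn compactum.
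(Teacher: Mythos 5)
Your proof is correct and follows essentially the same route as the paper: both reduce to the elementary-submodel characterization of Lemma \ref{lemma-el}, fix a countable $M$ containing $X$, $Y$, $\varphi$, and then use Lemma \ref{lemma-eq-pair}(1) and (2) to show that each non-singleton $M$-class of $Y$ contains a non-singleton $M$-class of $X$ in its (pairwise disjoint) preimage, so countability transfers from $X/M$ to $Y/M$. The paper phrases the counting directly (all but countably many $Y$-classes have only singleton $X$-classes in their preimage, hence are singletons by the lemma) while you argue contrapositively, but the substance is identical.
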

\begin{proof}
It is sufficient to check that for a club of elementary submodels $M$,
all but countably many $M$--classes of $Y$ are singletons.
Fix $M$ as in Lemma \ref{lemma-eq-pair}; so
all but countably many $M$--classes of $X$ are singletons.
Then for all but countably many classes $K = [y]$ of $Y$:
all $M$--classes of $X$ inside of $\varphi\iv(K)$ are singletons,
so that, by the lemma, $K$ is a singleton.
\end{proof}

Note that we needed to assume that $\cchi(Y) = \aleph_0$.
Otherwise, when $X$ is not HL, we would get a trivial counterexample
of the form $X/K$, where $K$ is a closed set which is not a $G_\delta$.

Examining whether an Aronszajn compactum may be both HS and HL reduces 
to considering zero dimensional spaces and connected spaces,
by the following lemma.

\begin{lemma}
Assume that $X$ is an HL Aronszajn compactum,
$\varphi : X \onto Y$.  Then either
$Y$ is an Aronszajn compactum or some $\varphi\iv\{y\}$ is an
Aronszajn compactum.
\end{lemma}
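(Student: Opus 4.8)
The plan is to settle the easy half of the dichotomy first and thereby reduce to a single implication. The key initial observation is that each fiber $\varphi\iv\{y\}$ is a \emph{closed} subset of $X$, since $Y$ is Hausdorff and $\varphi$ is continuous. By Lemma \ref{lemma-subset}, every closed subset of the Aronszajn compactum $X$ is either second countable or itself an Aronszajn compactum. Hence the alternative ``some $\varphi\iv\{y\}$ is an Aronszajn compactum'' fails \emph{precisely} when every fiber of $\varphi$ is second countable, and the entire lemma reduces to the implication: \emph{if every $\varphi\iv\{y\}$ is second countable, then $Y$ is an Aronszajn compactum.} I would prove this implication directly.

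Next I would assemble the standing topological facts about $Y$. Being a continuous image of the compactum $X$, the space $Y$ is compact with $w(Y)\le w(X)=\aleph_1$. I would check that $Y$ is HL: given $A\subseteq Y$ and a cover of $A$ by relatively open sets, lift it through $\varphi$ to a cover of $\varphi\iv(A)$, extract a countable subcover using that $X$ is HL, and push it back down using that $\varphi$ is onto. Since $Y$ is then compact, regular, and HL, every open set is $F_\sigma$, so each singleton $\{y\}$ is a $G_\delta$; a routine shrinking argument in the compact space $Y$ converts a decreasing $G_\delta$ representation of $\{y\}$ into a countable local base, giving $\cchi(Y)=\aleph_0$.

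The crux is to rule out $w(Y)=\aleph_0$. Suppose toward a contradiction that $Y$ were second countable, and fix an embedding $\iota:Y\hookrightarrow Q$. Then $f:=\iota\circ\varphi\in C(X,Q)$, and for each $z\in Q$ the fiber $f\iv\{z\}$ is either empty or equals some $\varphi\iv\{y\}$, hence is second countable by our assumption. But, as noted in the discussion following Lemma \ref{lemma-ideal}, no Aronszajn compactum can carry such a map: this is condition (5) of that lemma, which for $S=X$ forces $X\in\II_X$, i.e.\ $X$ second countable. That contradicts $w(X)=\aleph_1$, so in fact $w(Y)=\aleph_1$. Now $Y$ is a continuous image of the Aronszajn compactum $X$ with $w(Y)=\aleph_1$ and $\cchi(Y)=\aleph_0$, so Theorem \ref{thm-ar-quotient} applies and yields that $Y$ is an Aronszajn compactum, as required.

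I expect the main obstacle to be exactly this last step --- certifying that the uncountable weight of $X$ survives in $Y$ rather than dissolving into the (now second countable) fibers. The fiber dichotomy and the HL/first-countability bookkeeping are routine, but the substantive content is that a continuous map onto a second countable space with all fibers second countable simply cannot exist on an Aronszajn compactum. This is precisely the Fremlin-type phenomenon captured by condition (5) of Lemma \ref{lemma-ideal}, and invoking it is what makes the argument close.
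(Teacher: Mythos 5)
Your proof is correct and is essentially the paper's own argument, just organized contrapositively: the paper splits on whether $Y$ is second countable, while you split on whether all fibers are, but both cases are settled by exactly the same ingredients --- Lemma \ref{lemma-subset} for the fiber alternative, condition (5) of Lemma \ref{lemma-ideal} to show that a map to $Q$ with second countable fibers cannot exist on an Aronszajn compactum, and Theorem \ref{thm-ar-quotient} to conclude that $Y$ is Aronszajn. The extra bookkeeping you supply (that $Y$ is HL, hence first countable, and that its weight is then forced to be $\aleph_1$) is exactly what the paper's one-line appeal to those results leaves implicit.
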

\begin{proof}
$Y$ will be an Aronszajn compactum unless it is second countable.
But if it is second countable, then some  $\varphi\iv\{y\}$
will be not second countable by Lemma \ref{lemma-ideal},
and then  $\varphi\iv\{y\}$ will be an Aronszajn compactum.
\end{proof}

\begin{corollary}
Suppose there is an Aronszajn compactum $X$ which is HS and HL.
Then there is an Aronszajn compactum $Z$ which is HS and HL
and which is either connected or zero dimensional.
\end{corollary}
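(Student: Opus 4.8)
The plan is to realize the required dichotomy by mapping $X$ onto its space of connected components and then feeding the resulting quotient into the lemma immediately preceding this corollary. Concretely, I would let $\sim$ be the equivalence relation on $X$ identifying two points exactly when they belong to the same quasi-component, set $Y := X/{\sim}$, and let $\varphi : X \onto Y$ be the quotient map. The reason for this choice is that the quotient will be zero dimensional while its fibers will be the connected components of $X$, so the two alternatives supplied by the lemma match exactly the two alternatives (``zero dimensional'' or ``connected'') demanded by the corollary.

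First I would check that $Y$ is a zero dimensional compactum whose fibers $\varphi\iv\{y\}$ are connected. This rests on the classical fact that in a compact Hausdorff space the quasi-components coincide with the connected components; granting this, the $\sim$-classes are exactly the connected components (hence connected and closed in $X$), distinct classes are separated by clopen subsets of $X$, each such clopen set is a union of classes, and these descend to clopen sets of $Y$ separating points. Thus $Y$ is a zero dimensional Hausdorff space, and it is compact as a continuous image of the compactum $X$.

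Next I would verify that $Y$ inherits HS and HL from $X$. Both are preserved under continuous surjections of compacta by the usual argument: for any $A \subseteq Y$ the preimage $\varphi\iv(A)$ is a subspace of $X$, hence separable (respectively Lindel\"of) since $X$ is HS (respectively HL), and $A$ is a continuous image of $\varphi\iv(A)$, so $A$ is again separable (respectively Lindel\"of). With $X$ an HL Aronszajn compactum and $\varphi : X \onto Y$ now in hand, the preceding lemma applies, yielding two cases. If $Y$ is an Aronszajn compactum, I would take $Z = Y$, which is then a zero dimensional HS, HL Aronszajn compactum. Otherwise some fiber $K := \varphi\iv\{y\}$ is an Aronszajn compactum, and I would take $Z = K$, which is connected (being a component of $X$) and, as a closed subspace of $X$, is HS and HL. Either way $Z$ meets the requirements.

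The only genuinely non-routine point is the structural fact underlying the first step, namely that the quotient of a compactum by its quasi-components is a zero dimensional compactum and that these quasi-components are precisely the connected components. Everything afterward is either a direct appeal to the preceding lemma or a standard hereditary / continuous-image preservation argument for HS and HL. I would therefore concentrate the care on justifying the quotient construction, while keeping the case analysis at the end deliberately brief.
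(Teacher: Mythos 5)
Your proposal is correct and follows essentially the same route as the paper: collapse the connected components (equivalently, quasi-components) of $X$ to points, apply the preceding lemma to $\varphi : X \onto Y$, and take $Z$ to be either the zero dimensional quotient $Y$ or an Aronszajn fiber, which is a component and hence connected. The paper's proof is just a two-line version of this, leaving implicit the details you spell out (quasi-components coinciding with components in compacta, zero-dimensionality of $Y$, and preservation of HS and HL under continuous surjections and closed subspaces).
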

\begin{proof}
Get $\varphi : X \onto Y$ by collapsing all connected components to points.
Then $Z$ is either $Y$ or some component.
\end{proof}

Note that the cone over $X$ is also connected, but is
not an Aronszajn compactum by Lemma \ref{lemma-Ar-prod}.

\section{Constructing Aronszajn Compacta}
\label{sec-constr} 

We begin this section by constructing a space $X$  which proves
Theorem \ref{thm-Aronsz-basic}.
We construct $X=X_{\omega_1}$ as an inverse limit as
a closed subspace of $Q^{\omega_1}$.
To make $X$ both HS and HL,
we shall apply the following lemma:

\begin{lemma}
\label{lemma-hshl}
Assume that $X$ is compact and for all closed $F \subseteq X$,
there is a compact metric $Y$ and a map $g: X \onto Y$
such that $g \rest g\iv(g(F)) \;:\; g\iv(g(F)) \;\onto\; g(F) $
is irreducible.  Then $X$ is both HS and HL.
\end{lemma}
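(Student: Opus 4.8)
The plan is to prove the contrapositive characterization of HS and HL in terms of irreducible maps. Recall the classical facts: a compactum $X$ is HL iff every closed subset is a $G_\delta$ (equivalently, every open set is $F_\sigma$), and $X$ is HS iff $X$ has no uncountable discrete subspace and no left-separated (or right-separated) uncountable subspace. The key classical lemma I would invoke is that for a compactum $Y$ and an irreducible map $h : Z \onto Y$, the space $Z$ is HS (resp.\ HL) iff $Y$ is HS (resp.\ HL); irreducible preimages preserve these cardinal functions. Since the target $Y$ in the hypothesis is compact metric, it is automatically second countable, hence both HS and HL. The strategy is therefore to transfer HS and HL from these metric quotients back up to $X$, localizing the argument to an arbitrary closed set or an arbitrary subspace witnessing a failure.

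First I would argue HL. Suppose $X$ is not HL; then some open $U \subseteq X$ fails to be $F_\sigma$, equivalently $F := X \setminus U$ is closed but not a $G_\delta$. Apply the hypothesis to this $F$ to get a compact metric $Y$ and $g : X \onto Y$ with $g \rest g\iv(g(F)) : g\iv(g(F)) \onto g(F)$ irreducible. Since $Y$ is metric, $g(F)$ is a closed subset of a second countable space, hence a $G_\delta$ in $Y$; pulling back, $g\iv(g(F))$ is a $G_\delta$ in $X$. The point of irreducibility is to show $F$ is ``saturated'' enough that $F = g\iv(g(F))$, or at least that $F$ itself is a $G_\delta$: irreducibility of the restricted map forces $F$ to be dense in each fiber-saturated piece in a way that, combined with $g(F)$ being $G_\delta$, yields that $F$ is $G_\delta$ in $X$, a contradiction.

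For HS I would run the analogous localized argument. Suppose $X$ is not HS, so there is an uncountable left-separated subspace; let $F$ be its closure and apply the hypothesis to this $F$. The irreducible map onto the metric image $g(F)$ pushes the left-separated set forward, and since metric compacta are HS, the image cannot support an uncountable left-separated family, while irreducibility guarantees that the separation structure is reflected faithfully downstairs (an irreducible preimage of an HS space is HS). This contradiction establishes HS. In both halves the role of requiring the condition for \emph{all} closed $F$, rather than just for $X$ itself, is precisely to have a metric quotient tailored to whatever closed witness of non-HS or non-HL presents itself.

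The main obstacle I expect is the careful bookkeeping around irreducibility: specifically, verifying that the restricted irreducible map $g \rest g\iv(g(F))$ lets one conclude $F$ is a $G_\delta$ (for HL) and that no uncountable left-separated subset survives (for HS). Irreducibility means no proper closed subset of the domain maps onto the whole image, and the delicate step is leveraging this to control the fibers of $g$ over $g(F)$ well enough that topological smallness (being $G_\delta$, or being hereditarily separable) transfers through the quotient. I would isolate as a sub-claim the statement that if $h : Z \onto W$ is an irreducible map between compacta and $W$ is second countable, then $Z$ is both HS and HL, and apply it with $Z = g\iv(g(F))$, $W = g(F)$; the bulk of the work is then this general preservation fact, after which the localization to each closed $F$ completes the proof.
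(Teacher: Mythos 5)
Your argument rests on a ``key classical lemma'' that is in fact false: it is \emph{not} true that an irreducible preimage of an HS (or HL, or even second countable) compactum is HS or HL. The standard counterexample is the Gleason cover (absolute) of $[0,1]$, i.e.\ the Stone space $E$ of the regular-open algebra of $[0,1]$: the canonical map $E \onto [0,1]$ is irreducible, but $E$ is an infinite extremally disconnected compactum, hence contains a copy of $\beta\omega$, and so is neither HS nor HL. What irreducible maps between compacta do preserve is \emph{density}: if $h : Z \onto W$ is irreducible and $D \subseteq W$ is dense, then picking one point in each fiber $h\iv\{d\}$ for $d \in D$ gives a set whose closure maps onto $\cl(D) = W$, hence equals $Z$ by irreducibility; so $Z$ is separable whenever $W$ is. But weight, character, HS and HL can all blow up under irreducible preimages. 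There is also a structural warning sign you should have noticed: if your sub-claim were true, then applying the hypothesis to the single closed set $F = X$ would already prove the whole lemma, making the quantifier ``for all closed $F$'' pointless; it is there precisely because each instance of the hypothesis yields information only about that one $F$.

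Your HL half is essentially salvageable, since the step you hedge on (``irreducibility forces $F = g\iv(g(F))$, or at least\ldots'') is a one-line fact and is exactly what the paper uses: $F$ is a closed subset of $g\iv(g(F))$ that maps onto $g(F)$, so irreducibility of $g \rest g\iv(g(F))$ gives $F = g\iv(g(F))$; hence $F$ is the preimage of a closed (therefore $G_\delta$) subset of the metric space $Y$, so $F$ is $G_\delta$ in $X$, and a compactum in which every closed set is $G_\delta$ is HL (open sets are $F_\sigma$, hence $\sigma$-compact, hence Lindel\"of). The HS half, however, must be rerouted through the true preservation fact rather than the false one: since $F = g\iv(g(F))$ maps irreducibly onto the separable space $g(F)$, the fiber-selection argument above shows $F$ is separable. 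Thus every closed subset of the compact HL space $X$ is separable, and that implies HS: $X$ is first countable (points are $G_\delta$ in a compactum), so given any $A \subseteq X$, take a countable dense $D \subseteq \cl(A)$ and, for each $d \in D$, a sequence from $A$ converging to $d$; the union of these sequences is a countable dense subset of $A$. So your localization strategy (tailoring a metric quotient to each closed witness) matches the paper, but the transfer downstairs has to go through separability of each closed set, not through an HS/HL preservation principle for irreducible maps, which fails.
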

\begin{proof}
By irreducibility, $g\iv(g(F)) = F$,
so that $F$ is a $G_\delta$ and $F$ is separable.
Thus, $X$ is a compact HL space in which all closed
subsets are separable, so $X$ is HS.
\end{proof}

In applying the lemma to $X = X_{\omega_1}$,
$g$ will be some $\pi^{\omega_1}_\alpha\res X$.
We shall use $\diamondsuit$ to capture all
closed  $F \subseteq Q^{\omega_1}$ so that
all closed $F \subseteq X$ will be considered.
This method was also employed in \cite{HK2},
which constructed some compacta which were HS and HL
but not Aronszajn.

As in standard inverse limit constructions, we inductively construct
$X_\alpha \subseteq Q^\alpha$, for $\alpha \le \omega_1$.
To ensure that $X$ will be Aronszajn, at each stage $\alpha< \omega_1$,
we carefully select a countable set
$\EE_\alpha \subseteq X_\alpha$ of ``expandable points'',
and at each stage $\beta > \alpha$, 
we construct $X_\beta\subseteq Q^\beta$ so that
$|(\sigma^{\beta}_\alpha)\iv\{x\}| = 1$
whenever $x \notin \EE_\alpha$.  Then the
$\LL_\alpha$ of Definition \ref{def-cube}
will be subsets of $\EE_\alpha$ and hence countable.

These preliminaries are included in
the following conditions:

\begin{conditions}
\label{cond-first-batch}
$X_\alpha$,  for $\alpha  \le \omega_1$, and
$\PP_\alpha, F_\alpha, \EE_\alpha, q_\alpha$,
for $0 < \alpha  < \omega_1$, satisfy:
\begin{enumctr}
\itemctr 
 Each $X_\alpha$ is a closed subset of $Q^\alpha$.
\itemctr 
 $\pi^\beta_\alpha(X_\beta) = X_\alpha$ whenever
$\alpha \le \beta \le \omega_1$.
\itemctr
 $\PP_\alpha$ is a countable family of closed subsets 
of $X_\alpha$, and $F_\alpha \in \PP_\alpha$.
\itemctr
For all $P \in \PP_\alpha$: \\
a.
$\sigma^{\alpha+1}_\alpha \res ((\sigma^{\alpha+1}_\alpha) \iv (P)) \;:\;
(\sigma^{\alpha+1}_\alpha)\iv(P) \;\onto\; P$ is irreducible, and \\
b.
$(\sigma^{\beta}_\alpha)\iv(P) \in \PP_{\beta}$ whenever
$\alpha \le \beta < \omega_1$. 
\itemctr 
For all closed $F \subseteq X$, there is an
$\alpha$ with $0 < \alpha < \omega_1$ such that
$\sigma^{\omega_1}_\alpha(F) = F_\alpha$.
\itemctr 
$\EE_\alpha$ is a countable dense subset of $X_\alpha$,
and $q_\alpha \in  \EE_\alpha$.
\itemctr 
$\EE_\beta \subseteq (\sigma^{\beta}_\alpha)\iv(\EE_\alpha)$
whenever $0 < \alpha \le \beta < \omega_1$.
\itemctr 
$|(\sigma^{\alpha + 1}_\alpha)\iv\{x\}| = 1$
whenever $0 < \alpha < \omega_1$ and
$x \in X_\alpha \backslash \{q_\alpha\}$.
\itemctr 
$|(\sigma^{\alpha + 1}_\alpha)\iv\{q_\alpha\}| > 1$.
\end{enumctr}
\end{conditions}

We discuss below how to satisfy these conditions.
Conditions (1) and (2) simply determine our
$X = X_{\omega_1}  \subseteq Q^{\omega_1}$ with
each $X_\alpha=\pi^{\omega_1}_\alpha(X)$.
$\diamondsuit$ is used for (5).
Constructing an $X$ that satisfies Conditions (1 - 9)   
is enough to prove
Theorem \ref{thm-Aronsz-basic}:

\begin{lemma}
Conditions $(1 - 9)$ imply that $X = X_{\omega_1}$ is an
Aronszajn compactum and is both HS and HL.
\end{lemma}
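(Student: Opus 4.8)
The plan is to establish the four defining features of an Aronszajn compactum for $X = X_{\omega_1}$ --- compactness, $w(X) = \aleph_1$, $\cchi(X) = \aleph_0$, and countability of every level $\LL_\alpha$ --- and to read off HS and HL from Lemma~\ref{lemma-hshl}. It is convenient to prove HS and HL first, since first countability will then come for free. Compactness is immediate: by Condition~(1), $X$ is a closed subset of the compact space $Q^{\omega_1}$.

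For HS and HL I would verify the hypothesis of Lemma~\ref{lemma-hshl}. Given a closed $F \subseteq X$, Condition~(5) yields an $\alpha$ with $\sigma^{\omega_1}_\alpha(F) = F_\alpha$, and $F_\alpha \in \PP_\alpha$ by Condition~(3); I take $g = \sigma^{\omega_1}_\alpha$ and $Y = X_\alpha$, which is compact metric because $\alpha < \omega_1$. What must be checked is that $\sigma^{\omega_1}_\alpha \res (\sigma^{\omega_1}_\alpha)\iv(F_\alpha)$ is irreducible onto $F_\alpha$. Setting $A_\beta := (\sigma^\beta_\alpha)\iv(F_\alpha)$, Condition~(4b) gives $A_\beta \in \PP_\beta$, and Condition~(4a) at level $\beta$ says precisely that $\sigma^{\beta+1}_\beta \res A_{\beta+1}$ is irreducible onto $A_\beta$ (note $A_{\beta+1} = (\sigma^{\beta+1}_\beta)\iv(A_\beta)$). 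I would then show by transfinite induction on $\beta \in [\alpha,\omega_1]$ that $\sigma^\beta_\alpha \res A_\beta$ is irreducible onto $A_\alpha = F_\alpha$: at successors, a composition of irreducible maps between compacta is irreducible; at a limit $\lambda$, any proper closed $C \subsetneq A_\lambda$ already has proper image $\sigma^\lambda_\beta(C) \subsetneq A_\beta$ for some $\beta < \lambda$ (a basic neighbourhood separating a point of $A_\lambda$ from $C$ restricts only finitely many coordinates, all below some $\beta < \lambda$), so the inductive hypothesis applied to $\sigma^\beta_\alpha$ forces $\sigma^\lambda_\alpha(C) \ne A_\alpha$. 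This limit step is the technical heart, and the place I expect the real work; with it, Lemma~\ref{lemma-hshl} delivers HS and HL.

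First countability is then automatic: a compact Hausdorff HL space is perfectly normal (each open $U$ is $F_\sigma$, since by regularity $U$ is covered by open sets whose closures lie in $U$, and hereditary Lindel\"ofness extracts a countable subcover), so every singleton is a $G_\delta$ and hence, in a compact Hausdorff space, has a countable neighbourhood base; thus $\cchi(X) = \aleph_0$. For the weight, $w(X) \le \aleph_1$ as $X \subseteq Q^{\omega_1}$, and I would rule out second countability by a diagonal argument: were $X$ second countable, the closed sets $R_\alpha = \{(x,y) : \sigma^{\omega_1}_\alpha(x) = \sigma^{\omega_1}_\alpha(y)\}$ would form a decreasing family in the second countable space $X \times X$ with intersection the diagonal; since a second countable space admits no strictly decreasing $\omega_1$-chain of closed sets, the $R_\alpha$ stabilise and some $\sigma^{\omega_1}_{\alpha_0}$ is injective. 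But Condition~(9) gives $|(\sigma^{\alpha_0+1}_{\alpha_0})\iv\{q_{\alpha_0}\}| > 1$, and $\sigma^{\omega_1}_{\alpha_0+1}$ maps onto $X_{\alpha_0+1}$ by Condition~(2), so $|(\sigma^{\omega_1}_{\alpha_0})\iv\{q_{\alpha_0}\}| > 1$ as well --- a contradiction. Hence $w(X) = \aleph_1$.

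It remains to bound the levels, which I would do by proving $\LL_\alpha \subseteq \EE_\alpha$; countability of $\LL_\alpha$ then follows from Condition~(6), giving $\WW(X) = \omega_1$. Fix $x \in X_\alpha \setminus \EE_\alpha$. By transfinite induction on $\beta \in [\alpha,\omega_1]$ I would show $|(\sigma^\beta_\alpha)\iv\{x\}| = 1$, keeping the unique preimage $x_\beta$ outside $\EE_\beta$: at a successor, $x_\beta \ne q_\beta$ (as $q_\beta \in \EE_\beta$), so Condition~(8) makes the preimage a singleton, and Condition~(7) keeps it out of $\EE_{\beta+1}$; at a limit $\lambda$, the thread below $\lambda$ already fixes every coordinate of any preimage, so it is unique, and Condition~(7) again keeps it out of $\EE_\lambda$. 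In particular $|(\sigma^{\omega_1}_\alpha)\iv\{x\}| = 1$, so $x \notin \LL_\alpha$. With all four features established, $X$ is an embedded Aronszajn compactum with $C = \omega_1$, hence an Aronszajn compactum, and it is both HS and HL.
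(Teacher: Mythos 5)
Your proof is correct and takes essentially the same route as the paper's: irreducibility of $\sigma^{\beta}_\alpha$ over sets in $\PP_\alpha$ by induction on $\beta$ combined with Lemma \ref{lemma-hshl} and Conditions (5)(3) for HS and HL; an induction from (6)(7)(8) showing $\LL_\alpha \subseteq \EE_\alpha$; and (9) for $w(X)=\aleph_1$ with $\cchi(X)=\aleph_0$ following from HL. The paper merely states these steps tersely, while you supply the details (the limit step of the irreducibility induction and the diagonal argument ruling out second countability), all of which are sound.
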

\begin{proof}
By (4) and induction on $\beta$,
$\sigma^{\beta}_\alpha \res ((\sigma^{\beta}_\alpha) \iv (P)) :
(\sigma^{\beta}_\alpha)\iv(P) \onto P$ is irreducible
whenever $\alpha \le \beta \le \omega_1$ and $P \in \PP_\alpha$.
Then $X$ is HS and HL by Lemma \ref{lemma-hshl} and (5)(3).

By (6)(7)(8) and induction, 
$|(\sigma^{\beta}_\alpha)\iv\{x\}| = 1$
whenever $0 < \alpha \le \beta \le  \omega_1$
and $x \in X_\alpha \backslash \EE_\alpha$.
So, $\LL_\alpha :=
\{x \in X_\alpha : |(\sigma^{\omega_1}_\alpha)\iv\{x\}| > 1\} 
\subseteq \EE_\alpha$, which is countable by (6).

Finally, $w(X) = \aleph_1$ by (9), and
$\cchi(X) = \aleph_0$ because $X$ is HL.
\end{proof}

To obtain Conditions $(1 - 9)$, we must add some further
conditions so that the natural construction avoids contradictions.
For example, satisfying Conditions (6) and (7) at stage $\beta$ 
requires $\bigcap_{\alpha < \beta}
(\sigma^{\beta}_\alpha)\iv(\EE_\alpha) \ne \emptyset$.
So we add Conditions (10 - 12) below making
the $\EE_\alpha$ into the levels of a tree;
the selection of the $\EE_\alpha$  will resemble the
standard inductive construction of an Aronszajn tree.

The sets $F_\alpha$ may be scattered or even singletons.
This cannot be avoided, 
because we are using the $F_\alpha$ to ensure
that \emph{all} closed sets are $G_\delta$ sets, so that
$X$ is HL;  making just the perfect sets 
$G_\delta$ could produce a Fedorchuk space (as in \cite{HK3}),
which is not even first countable.
If $x \in P \in \PP_\alpha$ and $x$
is isolated in $P$,  then the irreducibility condition
in (4) requires that $|(\sigma^{\alpha + 1}_\alpha)\iv\{x\}| = 1$,
but that contradicts (9) if $x = q_\alpha$.
Now, if every point of $\EE_\alpha$
is isolated in some $P \in \PP_\alpha$, then
we cannot choose $q_\alpha \in \EE_\alpha$, as required by (6).
We shall avoid these problems by requiring that if
$x \in \EE_\alpha$ and $P \in \PP_\alpha$, then
either $x \notin P$ or $x$ is in the perfect kernel of $P$.
This can be ensured by choosing $F_\alpha$ first 
(as given by $\diamondsuit$), and then choosing $\EE_\alpha$;
for limit $\alpha$, our Aronszajn tree construction will give
us plenty of options for choosing the points of $\EE_\alpha$,
and we shall make $F_\alpha$ trivial for successor $\alpha$.
The additional conditions that handle this will employ
the notation in the following:

\begin{definition}
If $F$ is compact and not scattered, let $\ker(F)$ denote
the perfect kernel of $F$; otherwise, $\ker(F) = \emptyset$.
\end{definition}

To satisfy Condition (8), we construct
$X_{\alpha + 1}$ from $X_\alpha$
by choosing an appropriate 
$h_\alpha \in C(X_\alpha \backslash \{q_\alpha\}, Q)$,
and letting $X_{\alpha+1}=\cl(h_\alpha)$. 
Identifying 
$Q^{\alpha+1}$ with $Q^\alpha \times Q$ and
$h_\alpha$ with its graph,
$h_\alpha(x)$ is the $y \in Q$ such that
$x \cat y \in X_{\alpha+1}$.
Note that $h_\alpha$ is indeed continuous because its graph
is closed.  

Thus, to construct $X$ so that Conditions (1 - 9) are met,
we add the following:
\begin{conditions}
$h_\alpha$  and $r^n_\alpha$,
for $0 < \alpha  < \omega_1$ and $n < \omega$, satisfy:
\begin{enumctr}
\itemctr 
$(\sigma^{\beta}_\alpha)(\EE_\beta) = \EE_\alpha$
whenever $0 < \alpha \le \beta < \omega_1$.
\itemctr 
$|\EE_ {\alpha + 1}\cap(\sigma^{\alpha + 1}_\alpha)\iv\{q_\alpha\}| > 1$.
\itemctr 
If $x \in \EE_\alpha$,
then $(\sigma^{\alpha + n}_\alpha)(q_{\alpha + n}) = x$ for
some $n \in \omega$.
\itemctr 
$X_\alpha$ has no isolated points whenever $\alpha > 0$.
\itemctr 
$F_\alpha = \emptyset$ whenever $\alpha$ is a successor ordinal.
\itemctr 
$\PP_\beta = \{F_\beta\} \cup \{
(\sigma^{\beta}_\alpha)\iv(P): 0 < \alpha < \beta
\ \&\  P \in \PP_\alpha \}$.
\itemctr 
$\EE_\alpha \cap (P \backslash \ker(P)) = \emptyset$
whenever $P \in \PP_\alpha$.
\itemctr 
$r^n_\alpha \in X_\alpha \backslash \{q_\alpha \}$ and the sequence
$\langle r^n_\alpha : n \in \omega \rangle$ converges to $q_\alpha$.
\itemctr 
$h_\alpha \in C(X_\alpha \backslash \{q_\alpha\}, Q)$, and
$ X_{\alpha+1} = \cl(h_\alpha)$.
\itemctr 
If $q_\alpha \in P \in \PP_\alpha$,
then $ r^n_\alpha  \in \ker(P)$ for infinitely many $n$, and
every $y\in Q$ with $q_\alpha \cat y \in X_{\alpha + 1}$
is a limit point of the sequence
$\langle h_\alpha(r^n_\alpha ) :
n \in \omega \ \&\ r^n_\alpha  \in \ker(P) \rangle$.
\end{enumctr}
\end{conditions}

Observe that (10)(11)(12) will give us the following:

\begin{lemma}
\label{lem-EisL}
$\LL_\alpha = \EE_\alpha$ whenever $0 < \alpha < \omega_1$ .
\end{lemma}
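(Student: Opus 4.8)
The plan is to prove the two inclusions separately. The inclusion $\LL_\alpha \subseteq \EE_\alpha$ is already in hand: the argument verifying that Conditions $(1\text{--}9)$ yield an Aronszajn compactum shows, by induction from (6)(7)(8), that $|(\sigma^\beta_\alpha)\iv\{x\}| = 1$ whenever $0 < \alpha \le \beta \le \omega_1$ and $x \in X_\alpha \backslash \EE_\alpha$; taking $\beta = \omega_1$ gives exactly $\LL_\alpha \subseteq \EE_\alpha$. So the whole matter reduces to the reverse inclusion $\EE_\alpha \subseteq \LL_\alpha$, i.e.\ to showing that each ``expandable'' point really does split in the inverse limit $X = X_{\omega_1}$.

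First I would fix $x \in \EE_\alpha$ and invoke Condition (12) to produce $n \in \omega$ with $\beta := \alpha + n$ satisfying $\sigma^\beta_\alpha(q_\beta) = x$; thus $x$ is the $\alpha$-th projection of a point $q_\beta$ that is itself expanded one level higher. Next I would apply Condition (11) (or simply (9)) to obtain two distinct points $z_0 \ne z_1$ lying in $(\sigma^{\beta+1}_\beta)\iv\{q_\beta\}$. Since $\sigma^{\beta+1}_\alpha = \sigma^\beta_\alpha \circ \sigma^{\beta+1}_\beta$, both $z_0$ and $z_1$ satisfy $\sigma^{\beta+1}_\alpha(z_i) = \sigma^\beta_\alpha(q_\beta) = x$.

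The remaining step is to push this one-level splitting all the way up to $X$. By Condition (2) (with $\beta+1 \le \omega_1$), the projection $\sigma^{\omega_1}_{\beta+1} : X \onto X_{\beta+1}$ is onto, so I would choose $w_0, w_1 \in X$ with $\sigma^{\omega_1}_{\beta+1}(w_i) = z_i$. These are distinct because their images in $X_{\beta+1}$ differ, and each satisfies $\sigma^{\omega_1}_\alpha(w_i) = \sigma^{\beta+1}_\alpha(z_i) = x$ by composition of projections. Hence $|(\sigma^{\omega_1}_\alpha)\iv\{x\}| \ge 2$, so $x \in \LL_\alpha$, which establishes $\EE_\alpha \subseteq \LL_\alpha$ and, with the first paragraph, the equality $\LL_\alpha = \EE_\alpha$.

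The only delicate point is this final lifting: a priori the fact that $q_\beta$ splits at the single stage $\beta+1$ need not persist into the full inverse limit, and it is precisely the surjectivity of the bonding maps (Condition (2)) that guarantees two points lying one level apart extend to two distinct threads of $X$. Conditions (10) and (12) are what make (12)'s conclusion available for \emph{every} $x \in \EE_\alpha$, not merely for the distinguished points $q_\alpha$, so that no expandable point is left unsplit; everything else in the argument is routine composition of projections.
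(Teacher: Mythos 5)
Your proof is correct and is essentially the argument the paper intends: the paper gives no explicit proof, merely observing that the conditions yield the lemma, and your fleshing-out --- (6)(7)(8) by induction for $\LL_\alpha \subseteq \EE_\alpha$, then (12) to find $q_\beta$ projecting to a given $x \in \EE_\alpha$, (9) (or (11)) to split the fiber over $q_\beta$, and the surjectivity in (2) to lift the two points to distinct threads of $X = X_{\omega_1}$ --- is exactly the natural way to do it.
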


In the tree $T(X)$, although only
the node $q_\alpha \in \LL_\alpha$  has more than one successor
in $\LL_{\alpha + 1}$, (12) ensures that 
at limit levels $\gamma$, there are $2^{\aleph_0}$
choices for the elements of $\EE_\gamma$, so that we may
avoid the points in 
$F_\gamma \backslash \ker(F_\gamma)$, as required by (16).

By (14)(15), $\emptyset \in \PP_\alpha$ for all $\alpha > 0$,
and non-empty sets are added into the $\PP_\alpha$ only 
at limit $\alpha$.

The following proof gives the bare-bones construction; refinements 
of it produce the spaces of Theorem \ref{thm-Aronsz-refined}.

\begin{proofof}{Theorem \ref{thm-Aronsz-basic}}
Before we start, use $\diamondsuit$ to choose a closed
$\widetilde F_\alpha \subseteq Q^\alpha$ for each $\alpha < \omega_1$,
so that
$\{\alpha < \omega_1 : \pi^{\omega_1}_\alpha(F)=\widetilde F_\alpha\}$ 
is stationary for all closed $F \subseteq Q^{\omega_1}$.
To begin the induction:
$X_0$ must be $\{\emptyset\} = Q^0$, and 
$\PP_\alpha, F_\alpha, \ldots\ldots$ are only defined
for $\alpha > 0$.

Now, fix $\beta$ with $0 < \beta < \omega_1$, and assume
that all conditions have been met below $\beta$.
We define in order
$X_\beta$, 
$F_\beta$,
$\PP_\beta$,
$\EE_\beta$,
$q_\beta$,
$r^n_\beta$,
$h_\beta$.

If $\beta$ is a limit, then $X_\beta$ is determined
by (1)(2) and the $X_\alpha$ for $\alpha < \beta$.
$X_1$ can be any perfect subset of $Q^1$.
If $\beta = \alpha+1 \ge 2$, then
$X_\beta = \cl(h_\alpha)$, as required by (18).
Now let $F_\beta = \widetilde F_\beta$ if
$\widetilde F_\beta \subseteq X_\beta$ and $\beta$ is a limit;
otherwise, let $F_\beta = \emptyset$.
$\PP_\beta$ is now determined by (15).

$\EE_1$ can be any countable dense subset of $X_1$.
If $\beta = \alpha+1 \ge 2$, 
let $\EE_\beta =
(\sigma^{\beta}_\alpha) \iv (\EE_\alpha \backslash \{q_\alpha\} )
\cup \DD_\beta$, where $\DD_\beta$ is any subset of
$(\sigma^{\beta}_\alpha)\iv \{q_\alpha\}$ such that
$2 \le |\DD_\beta| \le \aleph_0$.
Observe that $\EE_\beta$ is dense in $X_\beta$
(without using $\DD_\beta$), so (6) is preserved,
and $\DD_\beta$ guarantees that (11) is preserved.
To verify (16) at $\beta$, note that by (15) at $\alpha$, every
non-empty set in $\PP_\beta$ is of the form
$\widehat P :=
(\sigma^{\beta}_\alpha) \iv (P)$ for some $P \in \PP_\alpha$.
So, if (16) fails at $\beta$, fix
$P \in \PP_\alpha$ and 
$x \in \EE_\beta \cap (\widehat P \backslash \ker(\widehat P))$.
Then $x \in (\sigma^{\beta}_\alpha)\iv \{q_\alpha\}$,
so $q_\alpha \in P$, and hence $q_\alpha \in \ker(P)$;
but then by (19), $x$ is a limit of a sequence of elements
of $\ker(\widehat P)$, so that $x \in \ker(\widehat P)$.

For limit $\beta$, let
$\EE_\beta = \{x^* : x \in \bigcup_{\alpha < \beta} \EE_\alpha\}$,
where, $x^*$, for $x \in \EE_\alpha$,  is
some $y \in X_\beta$ such that $\pi^\beta_\alpha(y) = x$
and $\pi^\beta_\xi(y) \in \EE_\xi$ for all $\xi < \beta$.
Any such choice of the $x^*$ will satisfy (10).
But in fact, using (11)(12), for each such $x$ there
are $2^{\aleph_0}$ possible choices of $x^*$,
so we can satisfy (16) by avoiding the countable
sets $P \backslash \ker(P)$ for $P \in \PP_\beta$.

To facilitate (12), list each $\EE_\alpha$ as
$\{e_\alpha^j : j \in \omega\}$; let
$e_0^j = \emptyset \in X_0$.
Then, if $\beta$ is a successor ordinal of the form
$\gamma + 2 ^i 3^j$, where $\gamma$ is a limit or $0$,
choose $q_\beta \in \EE_\beta$ so that
$\sigma^\beta_{\gamma + i}(q_\beta) = e_{\gamma + i}^j$.
For other $\beta$, $q_\beta \in \EE_\beta$ can
be chosen arbitrarily.

Next, we may choose the $r^n_\beta$ to satisfy (19)
because if $q_\beta \in P \in \PP_\beta$,
then $q_\beta \in \ker(P)$ by (16), so that 
$q_\beta$ is also a limit of points in $\ker(P)$.

Finally, we must choose
$h_\beta \in  C(X_\beta \backslash \{q_\beta\}, Q)$.
Conditions (18)(19) only require that 
$h_\beta$ have a discontinuity at $q_\beta$ with the property that
every limit point of the function at $q_\beta$ is also
a limit of each of the sequences 
$\langle h_\beta(r^n_\beta ) :
n \in \omega \ \&\ r^n_\beta  \in \ker(P) \rangle$.
Since $X_\beta$ is a compact metric space
with no isolated points, we may accomplish this by
making every point of $Q$ a limit point of each
$\langle h_\beta(r^n_\beta ) :
n \in \omega \ \&\ r^n_\beta  \in \ker(P) \rangle$.
\end{proofof}

If we choose each $h_\beta$ as above and also set $X_1 = Q$,
then our $X$ will be connected, and it is fairly easy to choose
the $h_\beta$ so that $X$ fails to be locally connected.
The next theorem shows how
to make $X$ connected and locally connected.
We construct $X$ so that each $X_\alpha$ 
is homeomorphic to the \textit{Menger sponge}, $\MS$,
and all the maps $\sigma^\beta_\alpha$ are monotone.
The Menger sponge \cite{Me} is a one dimensional
locally connected metric continuum; the properties of $\MS$ used
in inductive constructions such as these are summarized in \cite{HK3},
which contains further references to the literature.
A map is \emph{monotone} iff all point inverses are connected.
Monotonicity of the $\sigma^\beta_\alpha$ will imply that 
$X$ is locally connected.

At successor stages, 
to construct $X_{\alpha+1} \cong \MS$,   
we assume that $X_\alpha \cong \MS$ and 
apply the following special case of 
Lemmas 2.7 and 2.8 of \cite{HK3}:  

\begin{lemma}
\label{lem-MS}
Assume that $q \in X \cong \MS$ and that for each $j \in \omega$,
the sequence
$\langle r^n_j : n \in \omega \rangle$ converges to $q$,
with each $r^n_j \ne q$.  Let $\pi : X \times [0,1] \onto X$
be the natural projection.  Then there is a $Y \subseteq  X \times [0,1]$
such that:
\begin{itemizz}
\item[1.] $Y \cong \MS$ and $\pi(Y) = X$.
\item[2.] $|Y \cap \pi\iv\{x\}| = 1$ for all $x \ne q$.
\item[3.] $\pi\iv\{q\} = \{q\}\times [0,1]$.
\item[4.] Let $Y \cap \pi\iv\{r^n_j\} = \{(r^n_j, u^n_j)\}$.  Then,
for each $j$, every point in $[0,1]$ is a limit point
of $\langle u^n_j : n \in \omega \rangle$.
\end{itemizz}
\end{lemma}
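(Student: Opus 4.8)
The plan is to realize $Y$ as the closure of the graph of a carefully chosen continuous function $f \colon X \setminus \{q\} \to [0,1]$. Setting $Y := \cl(\{(x, f(x)) : x \ne q\})$ inside $X \times [0,1]$, the ``graph'' structure makes condition (2) automatic: since $f$ is continuous on the open set $X \setminus \{q\}$, its graph is already closed away from $q$, so every limit point of the graph outside the graph itself projects to $q$. The content of (3) and (4) then becomes a demand on how $f$ oscillates as $x \to q$. Concretely, I would first fix a decreasing neighborhood base $\{V_k : k \in \omega\}$ of $q$ in $X$ with each $V_k$ open and connected (possible since $X \cong \MS$ is locally connected), and build $f$ by a bookkeeping recursion that, on the successive ``shells'' $V_k \setminus V_{k+1}$, sweeps the whole interval $[0,1]$, while simultaneously, for each fixed $j$, assigning values $f(r^n_j) = u^n_j$ along a cofinal set of $n$ so that $\{u^n_j : n\}$ is dense in $[0,1]$. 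The density of the swept values forces every $u \in [0,1]$ to be a limit of $f(x)$ as $x \to q$, which yields $Y \cap (\{q\} \times [0,1]) = \{q\} \times [0,1]$, i.e.\ (3), and the analogous density along each sequence gives (4).

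The genuinely hard part is (1): that $Y \cong \MS$. I would verify this through Anderson's topological characterization of the Menger sponge---a metric continuum is homeomorphic to $\MS$ exactly when it is one-dimensional, connected, locally connected, has no local cut points, and has no nonempty open subset embeddable in the plane---which is the form in which the relevant machinery is packaged in \cite{HK3}. Compactness of $Y$ is clear, and connectedness follows because $X \setminus \{q\}$ is connected, $f$ is continuous, and $\{q\} \times [0,1]$ is a connected set meeting the closure of the graph. The set $Y \setminus (\{q\}\times[0,1])$ is open in $Y$, and $\pi \rest Y$ restricts there to a homeomorphism onto $X \setminus \{q\}$; hence away from the inserted arc, $Y$ inherits one-dimensionality, local connectedness, the absence of local cut points, and local non-planarity directly from $X \cong \MS$. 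Dimension $1$ for all of $Y$ then follows from the countable sum theorem, since the remaining closed piece is just the arc $\{q\}\times[0,1]$.

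The crux, and the step I expect to be the main obstacle, is the local analysis at the points $(q,u)$ of the inserted arc. There I must produce \emph{arbitrarily small connected} neighborhoods in $Y$ and simultaneously rule out local cut points and planar open subsets. A basic neighborhood has the form $(V \times (u-\varepsilon, u+\varepsilon)) \cap Y$, consisting of a subarc of $\{q\}\times[0,1]$ together with the graph points $(x,f(x))$ for $x \in V \setminus \{q\}$ with $f(x)$ near $u$; for this to be connected and sponge-like, the approximate level sets $f\iv((u-\varepsilon,u+\varepsilon)) \cap V$ must themselves be connected and accumulate on the arc. This is not guaranteed by value-density alone, so the recursion defining $f$ must be strengthened: at each shell I would route the sweep of $[0,1]$ through connected subcontinua of $X$ so that each approximate level set is connected and shrinks toward $q$. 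This quantitative control is precisely what Lemmas 2.7 and 2.8 of \cite{HK3} provide, and the present statement is the special case in which the target interval is a single copy of $[0,1]$ and the prescribed convergent sequences are the $\langle r^n_j : n \in \omega\rangle$; the remaining work is then a routine check that those hypotheses are met.
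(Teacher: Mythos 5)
Your proposal is correct and takes essentially the same route as the paper: the paper gives no internal proof of this lemma at all, but simply quotes it as a special case of Lemmas 2.7 and 2.8 of \cite{HK3}, which is exactly where your argument ultimately lands. Your graph-closure framing, $Y = \cl(\{(x,f(x)) : x \ne q\})$ with $f \in C(X \setminus \{q\}, [0,1])$, matches precisely how the paper deploys the lemma in its construction (Conditions (18)(19), where $X_{\alpha+1} = \cl(h_\alpha)$), so the sketch plus the deferral to \cite{HK3} is the paper's own proof.
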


Constructing $X$ as such an inverse limit of 
Menger sponges will make $X$ one dimensional.
The results quoted
from \cite{HK3} about $\MS$ were patterned on an earlier
construction of van Mill \cite{vM}, which involved an inverse
limit of Hilbert cubes;   
replacing $\MS$ by $Q$  here
would yield an infinite dimensional version
of this Aronszajn compactum.
The following summarizes several possibilities for 
$X$ and its associated tree:
\begin{theorem}
\label{thm-Aronsz-refined}
Assume $\diamondsuit$.
For each of the following $2 \cdot 3 = 6$ possibilities,
there is an Aronszajn compactum $X$
with associated Aronszajn tree $T$ such that $X$ is HS and HL.
Possibilities for $T$:
\begin{itemizz}
\item[a.] $T$ is Suslin.
\item[b.] $T$ is special.
\end{itemizz}
Possibilities for $X$:
\begin{itemizz}
\item[$\alpha$.] $\dim(X) = 0$.
\item[$\beta$.] $\dim(X) = 1$ and $X$ is connected and locally connected.
\item[$\gamma$.] $\dim(X) = \infty$ and $X$ is connected and locally connected.
\end{itemizz}
\end{theorem}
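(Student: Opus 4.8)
The plan is to modify the bare-bones construction from the proof of Theorem \ref{thm-Aronsz-basic} so as to control two independent features: the combinatorial type of the tree (Suslin versus special), governed by possibilities (a)/(b), and the topological type of the fibers (zero dimensional, or a one or infinite dimensional locally connected continuum), governed by ($\alpha$)/($\beta$)/($\gamma$). Since these two aspects are handled by disjoint parts of the inductive machinery, I expect to treat them as orthogonal and combine them freely, giving all $2 \cdot 3 = 6$ spaces at once.

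For the topological dimension, I would keep the inverse-limit framework of Conditions (1 -- 19) but replace the factor $Q$ and the space $X_1$ as follows. For ($\alpha$), take $X_1$ to be a Cantor set and build each $h_\beta$ so that $X_{\alpha+1}$ is again zero dimensional; since the branching at $q_\alpha$ replaces a point by a compact metric fiber, one arranges each such fiber to be zero dimensional, and zero dimensionality is preserved under inverse limits. For ($\beta$), I would carry out the construction with each $X_\alpha \cong \MS$ and all bonding maps $\sigma^\beta_\alpha$ monotone, using Lemma \ref{lem-MS} at each successor step in place of the crude choice of $h_\alpha$: condition (4) of that lemma supplies exactly the dense-limit-point behavior required by (19), while monotonicity of the point-inverses propagates through the inverse limit to make $X$ connected and locally connected, and one dimensional. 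For ($\gamma$), I would repeat the ($\beta$) argument but with the van Mill Hilbert-cube analogue of Lemma \ref{lem-MS} (replacing $\MS$ by $Q$), yielding the same connectivity and local connectivity but $\dim(X) = \infty$.

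For the tree type, the relevant freedom is in the choice of the expandable points $\EE_\alpha$ at limit stages, which the text already flags as resembling the standard inductive construction of an Aronszajn tree. To get (b), a special tree, I would interleave the construction with a bookkeeping of a partition of $T$ into countably many antichains, choosing the $x^*$ at each limit level inside the already-prescribed $2^{\aleph_0}$ many options so as to realize a specializing function; this is the usual special-Aronszajn-tree construction imposed on the $\EE_\alpha$, and it is compatible with the density requirement (6) and the avoidance requirement (16) because those leave cofinally many admissible choices at each stage. To get (a), a Suslin tree, I would instead use $\diamondsuit$ a second time to seal potential maximal antichains: at each limit $\beta$ in a suitable stationary set, when $\diamondsuit$ guesses a maximal antichain $A$ of $T \res \beta$, I would choose the elements $x^*$ of $\EE_\beta$ to extend nodes already in $A$, forcing $A$ to remain maximal in the full tree. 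Here one $\diamondsuit$-sequence must simultaneously capture both the closed sets $\widetilde F_\alpha$ (for HL/HS) and the antichains; since both are subsets of $H(\omega_1)$-sized objects, a single $\diamondsuit$ on $\omega_1$ splits into two independent guessing sequences on disjoint stationary sets, so the two uses do not interfere.

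The main obstacle will be verifying that the Suslin-sealing requirement and the point-avoidance requirement (16) can be met simultaneously at a limit stage: sealing an antichain forces the new points $x^*$ to lie above specified nodes, while (16) forces them to avoid the meager sets $P \backslash \ker(P)$ for $P \in \PP_\beta$. I expect this to go through because each node of $T \res \beta$ has $2^{\aleph_0}$ many candidate extensions into $X_\beta$ (by (11)(12), as already noted in the text), whereas the forbidden set $\bigcup\{P \backslash \ker(P) : P \in \PP_\beta\}$ is a countable union of countable scattered remainders and hence omits all but countably many of those candidates; so above any prescribed node there remain admissible choices, and the two demands are compatible. The locally-connected cases add a further compatibility check, namely that Lemma \ref{lem-MS} (or its $Q$-analogue) still admits the branching dictated by the tree construction, but this is immediate since that lemma already allows arbitrarily prescribed convergent sequences $\langle r^n_j \rangle$ at the single branch point $q$.
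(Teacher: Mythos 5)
Your proposal is correct and takes essentially the same route as the paper's proof: the same orthogonal split between the tree refinements (using $\diamondsuit$ to seal guessed maximal antichains of $T_\beta$ by putting each $x^*$ above a node of the antichain for Suslin; the standard inductive specializing map for special, in both cases exploiting the $2^{\aleph_0}$ admissible choices of $x^*$ to keep avoiding the countable sets $P \setminus \ker(P)$) and the dimension refinements (a Cantor set $X_1$ with $h_\alpha$ valued in $\{0,1\}$; Menger sponges with monotone bonding maps via Lemma \ref{lem-MS}; van Mill's Hilbert-cube analogue). The only detail you leave implicit is the inductive hypothesis the paper calls $(*)$ --- that every rational above $\varphi(x)$ is realized on every later limit level above $x$ --- which is exactly what makes the specializing map extendable at limits of limits, but this is part of the ``usual construction'' you invoke.
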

\begin{proof}
We refine the proof of Theorem \ref{thm-Aronsz-basic},
To obtain $(a)$ or $(b)$, the refinement is in the choice
of the $\EE_\beta$ for limit $\beta$.
To obtain $(\alpha)$ or $(\beta)$ or $(\gamma)$,
the refinement is in the choice of $X_1$ and the functions $h_\alpha$.
Since these refinements are independent of each other,
the discussion of $(a)(b)$ is unrelated to the discussion of
$(\alpha)(\beta)(\gamma)$.

For $(a)$: We use $\diamondsuit$ to kill all 
potential uncountable maximal antichains $A \subset T$.
Fix a sequence $\langle A_\alpha : \alpha < \omega_1\rangle$
such that each $A_\alpha$ is a countable subset of $Q^{< \alpha}$ and
such that for all $A \subseteq Q^{< \omega_1}$:
if each $A \cap Q^{< \alpha}$ is countable, then
$\{\alpha < \omega_1 : A\cap Q^{< \alpha} = A_\alpha\}$ is stationary.

Let $T_\beta= \bigcup\{\LL_\alpha: \alpha< \beta\}
= \bigcup\{\EE_\alpha: \alpha< \beta\}$ (see Lemma \ref{lem-EisL}),
and use $\tleft$ for the tree order.
For each limit $\beta < \omega_1$,
modify the construction of $\EE_\beta$
in the proof of Theorem \ref{thm-Aronsz-basic} as follows:
We still have
$\EE_\beta = \{x^* : x \in T_\beta\}$,
where, $x^*$, for $x \in T_\beta$, is chosen so that $x \tleft x^*$ and
$x^*$ defines a path through $T_\beta$.
But now, \emph{if} $A_\beta \subseteq T_\beta$ \emph{and}
$A_\beta$ is  a  maximal  antichain  in $T_\beta$, 
\emph{then} make sure that each $x^*$ is above some element of $A_\beta$.
To do this, use maximality of $A_\beta$ first to choose
$x^\dag \in T_\beta$ so that $x \tleft x^\dag$ and $x^\dag$
is above some element of $A_\beta$, and then choose $x^*$ so that
 $x \tleft x^\dag \tleft x^*$.
There are still $2^{\aleph_0}$ possible choices for $x^*$,
so we can satisfy (16) by avoiding the countable
sets $P\! \setminus\!\ker(P)$ as before.
Now, the usual argument shows that $T$ is Suslin.

For $(b)$:  Let $\Lim$ denote the set of countable limit ordinals,
and let $T^\Lim = \bigcup\{\LL_\alpha: \alpha  \in \Lim\} =
\bigcup\{\EE_\alpha: \alpha  \in \Lim\}$.
To make $T$ special,
inductively define an order preserving map 
$\varphi : T^\Lim \to \QQQ$.
To make the induction work, we also assume inductively:
\[
\forall \gamma,\beta \in \Lim \,  \forall x \in \LL_\gamma\,
\forall q \in \QQQ\;
[\gamma < \beta \; \&\;  q > \varphi(x)  
\ \to \ \exists y \in \LL_\beta \, [x \tleft y \; \&\; \varphi(y) = q]]
\tag{$*$}
\]
To start the induction, $\varphi \res \LL_\omega : \LL_\omega \to \QQQ$
can be arbitrary.

For $\beta = \alpha + \omega$, where $\alpha$ is a limit ordinal:
First, determine the $x^*$ exactly as in the proof of
Theorem \ref{thm-Aronsz-basic}.  Then, note that for each $x \in \LL_\alpha$,
the set $S_x := \{y \in \EE_\beta : x \tleft y\}$ has size $\aleph_0$,
so we can let $\varphi \res S_x$ map $S_x$ \emph{onto}
$\QQQ \cap (\varphi(x), \infty)$.

For $\beta < \omega_1$ which is a limit of limit ordinals:
Let 
\[
\EE_\beta = \{x^*_q : x \in T_\beta \;\&\;
q  \in \QQQ \cap (\varphi(x), \infty)\}
\ \ ,
\]
where each $x^*_q$  is chosen so that
$x \tleft x^*_q$ and
$x^*_q$ defines a path through $T_\beta$ and the $x^*_q$ are all different
as $q$ varies.
We let $\varphi(x_q^*) = q$, which will clearly preserve $(*)$,
but we must make sure that $\varphi$ remains order preserving.
For this, choose $x_q^*$ so that
$q > \sup\{\varphi(z) : z \in T^\Lim \; \&\ z \tleft x_q^*\}$.
Such a choice is possible using $(*)$ on $T_\beta$.
As before, there are $2^{\aleph_0}$ possible choices of $x_q^*$,
so we can still avoid the countable sets $P\!\setminus\!\ker(P)$.

For $(\alpha)$, just make sure that $X_\alpha$
is homeomorphic to the Cantor set $2^\omega$
whenever $0 < \alpha < \omega_1$.
In view of (13), this is equivalent to making $X_\alpha$
zero dimensional.  For $\alpha = 1$, we simply choose
$X_1$ so that $X_1 \cong 2^\omega$.
Then, for larger $\alpha$, just make sure that in (9),
we always have
$|(\sigma^{\alpha + 1}_\alpha)\iv\{q_\alpha\}| = 1$,
which will hold if in (18), we choose 
$h_\alpha \in C(X_\alpha \backslash \{q_\alpha\}, 2)$
(identifying $2 = \{0,1\}$ as a subset of $Q$).
To make this choice, and satisfy (19):
First, let $A_j$, for $j \in \omega$, be disjoint
infinite subsets of $\omega$ such that for each $P \in \PP_\alpha$,
if $q_\alpha \in P$  then for some $j$,
$ r^n_\alpha  \in \ker(P)$ for all $n \in A_j$.
Next, let $X_\alpha = K_0 \supset K_1 \supset K_2 \supset \cdots$,
where each $K_i$ is clopen, $\bigcap_i K_i = \{q_\alpha\}$,
and, for each $j$, there are 
infinitely many even $i$ and
infinitely many odd $i$ such that
$K_{i} \backslash K_{i + 1} \cap \{  r^n_\alpha : n \in A_j\} \ne \emptyset$.
Now, let $h_\alpha$ be
$0$ on $K_{i} \backslash K_{i + 1}$ when $i$ is even and 
$1$ on $K_{i} \backslash K_{i + 1}$ when $i$ is odd.

For $(\beta)$, 
construct $X$ so that each $X_\alpha$ 
is homeomorphic to the \textit{Menger sponge}, $\MS$,
and all the maps $\sigma^\beta_\alpha$ are monotone.
Then 
$\dim(X) = 1$ will follow from the fact that $X$ is an inverse
limit of one dimensional spaces.

For monotonicity of the $\sigma^\beta_\alpha$,
it suffices to ensure that each
$\sigma^{\alpha + 1}_\alpha$ is monotone.  By Condition (8),
that will follow if we make
$(\sigma^{\alpha + 1}_\alpha)\iv\{q_\alpha\}$ connected;
in fact we shall make
$(\sigma^{\alpha + 1}_\alpha)\iv\{q_\alpha\}$ homeomorphic to $[0,1]$,
as in the proof of Theorem \ref{thm-Aronsz-basic}.
But we also need to verify inductively that 
$X_\alpha \cong \MS$.  At limits, this follows from
Lemma 2.5 of \cite{HK3}.  At successor stages, 
we assume that $X_\alpha \cong \MS$ and
identify $[0,1]$ as a subspace of $Q$, so that
$X_{\alpha+1}$ may be the $Y$ of Lemma \ref{lem-MS}.

$(\gamma)$ is proved analogously
to $(\beta)$.  
Construct $X_\alpha \cong Q$ rather than $\MS$,
applying the results about $Q$ in \cite{vM}\S3.
As in \cite{vM}\S2, all the 
$\sigma^\beta_\alpha$ are cell-like $Z^*$-maps.
\end{proof}

\section{Chains of Clopen Sets}

The double arrow space has an uncountable chain
(under $\subset$) of clopen sets of real type.
This cannot happen in an Aronszajn compactum:

\begin{lemma}
\label{lemma-no-real-type}
If $X$ is an Aronszajn compactum and $\EE$ is an
uncountable chain 
of clopen subsets of $X$, then $\EE$ cannot be of real type.
\end{lemma}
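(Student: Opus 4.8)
The plan is to collapse $X$ onto a compact linearly ordered space determined by $\EE$, recognize that space as a split interval (double arrow) over an uncountable subset of $\RRR$, and observe that such a space cannot be a compacted Aronszajn line.

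So, assume toward a contradiction that $\EE$ is order-isomorphic (under $\subset$) to an uncountable $S\subseteq\RRR$; write $\EE=\{E_s:s\in S\}$ with $E_s\subsetneq E_t$ iff $s<t$. First I would define an equivalence relation on $X$ by declaring $x\approx y$ iff $x\in E\Leftrightarrow y\in E$ for every $E\in\EE$. Since each $E$ is clopen, each set $\{(x,y):x\in E\Leftrightarrow y\in E\}=(E\times E)\cup((X\setminus E)\times(X\setminus E))$ is clopen in $X\times X$, so $\approx$ is closed and $L:=X/\approx$ is compact Hausdorff with quotient map $\pi:X\onto L$. By Stone--Weierstrass the $\approx$--invariant members of $C(X)$ are exactly the uniform closure of the algebra generated by $\{\chi_E:E\in\EE\}$; hence $C(L)$ is generated by characteristic functions, $L$ is zero dimensional, and its clopen algebra is the interval (chain) algebra of $S$. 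Thus $L$ is the Stone space of that algebra, i.e.\ the compact LOTS of cuts of $S$, and each $\bar E_s:=\pi(E_s)$ is a clopen initial segment; the $\bar E_s$ form an uncountable chain of real type in $L$, and every $s\in S$ yields a jump of $L$.

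Next I would pin down the two invariants of $L$. Because $L$ carries an uncountable chain of clopen sets it is not metrizable (a compact metric space has only countably many clopen sets, since $C(\,\cdot\,,\{0,1\})$ is a discrete subset of the separable space $C(\,\cdot\,,\RRR)$); and as $L$ is a continuous image of $X$ we have $w(L)\le w(X)=\aleph_1$, so $w(L)=\aleph_1$. For $\cchi(L)=\aleph_0$ I would use that every cut of $S$ has countable cofinality from the left and countable coinitiality from the right, because $S\subseteq\RRR$ admits no increasing or decreasing $\omega_1$--sequence; choosing such cofinal and coinitial sequences of cuts produces a countable base of order-interval neighbourhoods at each point, so $L$ is first countable. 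This is exactly where the hypothesis that $\EE$ is of \emph{real type} is consumed.

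Finally I would invoke the machinery already assembled. By Theorem \ref{thm-ar-quotient}, since $X$ is an Aronszajn compactum and $\pi:X\onto L$ with $w(L)=\aleph_1$ and $\cchi(L)=\aleph_0$, the space $L$ is an Aronszajn compactum; being a LOTS, it is a compacted Aronszajn line by Lemma \ref{lemma-ar-line}, so by Definition \ref{def-ar-line} the closure of every countable subset of $L$ is second countable. But if $D_0\subseteq S$ is countable and order-dense, the corresponding countable set of cut points is dense in $L$, so its closure is all of $L$, which is not second countable --- a contradiction. The step I expect to be the main obstacle is the identification of $L=X/\approx$ as the split interval over $S$ together with the verification that $\cchi(L)=\aleph_0$; once $L$ is known to be a first countable compact LOTS of weight $\aleph_1$, the contradiction is immediate from the quotient theorem and the line lemma.
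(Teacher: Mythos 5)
Your proposal is essentially correct, but it takes a genuinely different route from the paper's. The paper argues directly from the definition of an Aronszajn compactum: after thinning $\EE$ to a densely ordered chain, it fixes a countable dense subchain $\DD$ and a \emph{single} countable-rank map $\varphi : X \onto Z$ onto a compact metric space for which every $A \in \DD$ is saturated, i.e.\ $A = \varphi\iv(\varphi(A))$. Density of $\DD$ then forces the closed sets $\varphi(B)$, $B \in \EE$, to be pairwise distinct; only countably many of them can be clopen in the metric space $Z$; and each non-clopen $\varphi(B)$ yields a point $y_B \in \varphi(B) \cap \varphi(X \setminus B)$, these $y_B$ being pairwise distinct with $|\varphi\iv\{y_B\}| \ge 2$, contradicting countability of the set of nontrivial fibers. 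You instead quotient by the whole chain at once, identify $L = X/\mathord{\approx}$ as the cut space (double arrow) over $S$, verify $w(L)=\aleph_1$ and $\cchi(L)=\aleph_0$, and then quote Theorem \ref{thm-ar-quotient} and Lemma \ref{lemma-ar-line}. Your route buys conceptual clarity: it makes explicit the double-arrow phenomenon that the paper only mentions in a remark after its proof, and it isolates exactly where ``real type'' is consumed (first countability of the cut space). The paper's route buys economy: one countable-rank map, no Stone duality, no structure theory for $L$.

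Two points in your write-up deserve attention. First, the direction of Lemma \ref{lemma-ar-line} you invoke (LOTS Aronszajn compactum $\Rightarrow$ compacted Aronszajn line) is precisely the direction whose proof the paper omits (its proof treats only $\leftarrow$). Citing the stated lemma is legitimate, but you should check non-circularity: that direction must not itself be proved via the present lemma (one natural proof of it goes through the Corollary in Section 6). A direct proof exists --- a separable, non--second countable compact LOTS has uncountably many jumps, and the uncountably many jumps disjoint from a countable elementary submodel give uncountably many nontrivial fibers --- so the circle can be broken, but as written this is left implicit. Second, in your final step, if $S$ has jumps then $L$ has isolated points, and the cut points arising from an order-dense countable $D_0 \subseteq S$ need not be dense in $L$. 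Either thin $S$ first to an uncountable densely ordered subset without endpoints (the analogue of the paper's opening reduction on $\EE$), or observe that the closure of your countable set still contains all non-isolated points of $L$, hence uncountably many jump pairs and therefore uncountably many relative clopen sets, so it is not second countable; either way the contradiction survives.
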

\begin{proof}
Suppose that $\EE$ is such a chain.  Deleting some elements
of $\EE$, we may assume that $(\EE, \subset)$
is a dense total order.
Let $\DD$ be a countable dense subset of $\EE$.
Since $X$ is an Aronszajn compactum, there is a map
$\varphi : X \onto Z$, where $Z$ is a compact metric space,
$A = \varphi\iv(\varphi(A))$ for all $A \in \DD$,
and $\{y \in Z : |\varphi\iv\{y\}| > 1\}$ is countable.
Since $\DD$ is dense in $\EE$, the sets
$\varphi(B)$ for $B \in \EE$ are all different.
Each $\varphi(B)$ is closed, and only countably many
of the $\varphi(B)$ can be clopen.
Whenever $\varphi(B)$ is not clopen, choose 
$y_B \in \varphi(B) \cap \varphi(X \backslash B)$.
Since $\DD$ is dense in $\EE$,
these $y_B$ are all different points, so
there are uncountably many such $y_B$.
But $\varphi\iv\{y_B\}$ meets both $B$ and $X \backslash B$,
so each $|\varphi\iv\{y_B\}| \ge 2$, a contradiction.
\end{proof}

Note that if this argument is applied with a chain of clopen sets in
the double arrow space, then
the  $|\varphi\iv\{y_B\}|$ will be exactly $2$.

\begin{lemma}
If $X$ is any separable space, 
and $\EE$ is an uncountable chain 
of clopen subsets of $X$, then $\EE$ must be of real type.
\end{lemma}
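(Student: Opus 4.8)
The plan is to exhibit an explicit order-embedding of $(\EE,\subset)$ into $\RRR$, using separability to code each clopen set faithfully by its trace on a fixed countable dense set. Fix a countable dense subset $D = \{d_n : n \in \omega\}$ of $X$, enumerated without repetition, and define $g : \EE \to \RRR$ by $g(E) = \sum\{2^{-n} : d_n \in E\}$. Each such sum is a subseries of $\sum_n 2^{-n}$, so it converges, and $g$ is well defined.

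The one thing that needs checking is that $g$ is strictly order-preserving along the chain. Suppose $E_1, E_2 \in \EE$ with $E_1 \subsetneq E_2$. Since $E_1$ is closed and $E_2$ is open, $E_2 \setminus E_1 = E_2 \cap (X \setminus E_1)$ is a nonempty open subset of $X$, so by density of $D$ it contains some $d_n$. Hence $g(E_2) - g(E_1) = \sum\{2^{-n} : d_n \in E_2 \setminus E_1\} > 0$. Because $\EE$ is a chain, any two of its members are comparable under $\subset$, so this shows $g$ is injective and strictly increasing from the linear order $(\EE, \subset)$ into $(\RRR, <)$. Consequently $g$ is an order-isomorphism of $\EE$ onto $g(\EE) \subseteq \RRR$, which is exactly what it means for $\EE$ to be of real type.

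There is no genuine obstacle here; the content of the lemma is just the standard observation that a chain of subsets of a countable set embeds into $\RRR$ under the binary coding $g$, combined with the fact that on a separable space the inclusion order among clopen sets is reflected order-faithfully by intersection with a countable dense set. The only place where both hypotheses are used is the strict-monotonicity step: closedness of $E_1$ makes $E_2 \setminus E_1$ open, and density of $D$ supplies a witnessing point $d_n$ that forces $g(E_1) < g(E_2)$. This is precisely the complementary situation to Lemma \ref{lemma-no-real-type}: in an Aronszajn compactum an uncountable clopen chain \emph{cannot} be of real type, whereas here separability guarantees that it always \emph{is}.
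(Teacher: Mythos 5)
Your proof is correct and takes essentially the same route as the paper: the paper's one-line argument observes that $E \mapsto E \cap D$ embeds $(\EE,\subset)$ as a chain in $(\PP(D),\subset)$ for a countable dense $D$, and your dyadic sum $g(E)=\sum\{2^{-n} : d_n \in E\}$ is just the standard explicit witness that such a chain embeds into $\RRR$. Your strict-monotonicity step (openness of $E_2 \setminus E_1$ plus density of $D$) is precisely the injectivity fact implicit in the paper's isomorphism claim, so you have merely spelled out the details the paper leaves to the reader.
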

\begin{proof}
If $D \subseteq X$ is dense, then 
$(\EE, \subset)$ is isomorphic to a chain in $(\PP(D), \subset)$.
\end{proof}

\begin{corollary}
If $X$ is a separable Aronszajn compactum and $\EE$ is a
chain of clopen subsets of $X$, then $\EE$ is countable.
\end{corollary}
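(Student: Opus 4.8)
The plan is to argue by contradiction, combining the two immediately preceding lemmas, which we are free to assume. Suppose toward a contradiction that $\EE$ is uncountable. The strategy has just two moves. First, I would invoke the lemma stating that in any separable space an uncountable chain of clopen subsets must be of real type; since $X$ is assumed separable, this forces $\EE$ to be of real type. Second, I would invoke Lemma \ref{lemma-no-real-type}: because $X$ is an Aronszajn compactum and $\EE$ is an uncountable chain of clopen subsets of $X$, that lemma asserts $\EE$ \emph{cannot} be of real type.

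These two conclusions are directly contradictory, so the assumption that $\EE$ is uncountable is untenable, and $\EE$ must be countable. That is the entire argument. The only mild care needed is to confirm that both hypotheses line up exactly with what the two lemmas require: separability of $X$ feeds the first lemma, and the Aronszajn-compactum hypothesis (together with $\EE$ being an uncountable chain of clopen sets) feeds Lemma \ref{lemma-no-real-type}. Both match the statement of the corollary verbatim, so no additional reduction or normalization is needed.

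Honestly, there is no genuine obstacle in the corollary itself; all the content has already been discharged in the two lemmas, and the corollary is precisely their conjunction under the extra hypothesis of separability. If I were hunting for any subtlety, it would only be the bookkeeping point that the ``real type'' dichotomy is exhaustive for uncountable clopen chains in the relevant setting, but that is exactly what the two lemmas jointly supply. Thus I would present the proof in a single short paragraph, stating the contradiction and concluding countability of $\EE$.
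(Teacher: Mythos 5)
Your proof is correct and is exactly the argument the paper intends: the corollary follows immediately by combining Lemma \ref{lemma-no-real-type} with the preceding lemma on separable spaces, yielding a contradiction if $\EE$ were uncountable. The paper leaves this implicit (no written proof is given), and your two-step contradiction is precisely that implicit argument.
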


Note that if $X$ is a zero dimensional compacted Aronszajn
line which is also Suslin (see Lemma \ref{lemma-sus-ar}),
then $X$ has
uncountable chain of clopen sets, but $X$ is not separable.

\end{document}